\documentclass[11pt,leqno]{amsart}

\makeatletter
\@namedef{subjclassname@2020}{\textup{2020} Mathematics Subject Classification}
\makeatother

\usepackage[top=2.5 cm,bottom=2 cm,left=2 cm,right=3 cm]{geometry}
\usepackage[utf8]{inputenc}
  \usepackage{color}

\usepackage{esint}
\usepackage{graphicx}
\usepackage{hyperref}
\usepackage{bm}

\numberwithin{equation}{section}

\newtheorem{example}{Example}[section]
\newtheorem{definition}[example]{Definition}
\newtheorem{theorem}[example]{Theorem}
 
\newtheorem{lemma}[example]{Lemma}
 
\newtheorem{remark}[example]{Remark}
\newtheorem*{maintheorem*}{Main Theorem}
\allowdisplaybreaks
\numberwithin{equation}{section}

\renewcommand{\i}{\ifmmode\mathit{\mathchar"7010 }\else\char"10 \fi}
\renewcommand{\j}{\ifmmode\mathit{\mathchar"7011 }\else\char"11 \fi}
\newcommand{\R}{\mathbb{R}}
\newcommand{\N}{\mathbb{N}}

\DeclareMathOperator*{\sign}{sign}
\newcommand{\sgn}[1]{\sign\left(#1\right)}
\newcommand{\abs}[1]{\left|#1\right|}
\newcommand{\norm}[1]{\left\|#1\right\|}

\newcommand{\vfi}{\varphi}

\newcommand{\px}{\partial_x}
\newcommand{\pt}{\partial_t}

\newcommand{\ptx}{\partial^2_{xt}}

\newcommand{\pxx}{\partial^2_{xx}}

\newcommand{\eps}{\varepsilon}

\newcommand{\ue}{u_{\eps}}
\newcommand{\uek}{u_{\eps_k}}

\newcommand{\re}{\rho_{\eps}}
\newcommand{\rek}{\rho_{\eps_k}}

\newcommand{\me}{m_{\eps}}
\newcommand{\pe}{p_{\eps}}
\newcommand{\pek}{p_{\eps_k}}

\newcommand{\wpe}{{\widetilde p_\eps}}
\newcommand{\wre}{{\widetilde \rho_\eps}}

\newenvironment{Definitions}
{%

\begin{enumerate}}%
{\end{enumerate}}

\newenvironment{DefinitionsE}
{%

\begin{enumerate}}%
{\end{enumerate}}

\begin{document}

\title[Power Plants]{Analysis of an asymptotic {thermo fluid-dynamic model} for parabolic trough power plants}

\author[Coclite]{G. M. Coclite}
\author[Gasser]{I. Gasser}

\address[Giuseppe Maria Coclite]{\newline Dipartimento di Meccanica,
  Matematica e Management, \newline Politecnico di Bari, \newline Via
  E. Orabona 4 -- 70125 Bari, Italy}
\email[]{giuseppemaria.coclite@poliba.it}

\address[Ingenuin Gasser]{\newline
  Fachbereich Mathematik, \newline Universit\"at Hamburg, \newline
  Bundesstrasse 55 --20146 Hamburg, Germany}
\email[]{ingenuin.gasser@uni-hamburg.de}

\date{\today}

\subjclass[2020]{35L65, 35F30, 35G60}

\keywords{parabolic trough power plant, thermo-fluid dynamics, non-linear PDE's}


\begin{abstract}
	Parabolic trough power plants transform solar radiative energy into thermal energy which is then typically used to produce electricity.
	We consider a model derived in \cite{BGSP} to describe parabolic trough power plants. In particular, the thermo-fluid dynamics is studied in a single collector pipe where the solar radiation is concentrated. The model is the result of simplifying assumptions and asymptotic processes on the underlying mass, momentum and energy balance equations. { We show existence of solutions for the model. In addition we study the long-time behaviour and the stationary problem.} 
\end{abstract}

\maketitle

%
%
\section{Introduction}
\label{sec:intro}
There is a world wide strongly increasing demand on energy, in particular energy  based on renewable sources. Reasons for that are the necessity to reduce the carbon dioxide production because of the evident climate change, the necessity to reduce the air pollution due to fossil based energy production, the unsecure future availability and cost of fossil resources, the effort to become independent on fossil and other non-renewable resources etc.
In addition there is e.g. a EU binding renewable energy target of at least 32\% reduction of greenhouse gases by 2030 and for becoming a climate neutral continent by 2050  established by the European Commission \cite{EC2022}.

Beside the hydroelectricity as ``classical'' renewable energy resource over the last decades there was a significant increase in the use of other renewable resources, such as solar, wind, geothermal energy etc. In the solar sector the two pilars are photovoltaics and solar thermal techniques. 
The former transforms radiative energy directly into electrical energy and typically needs to be combined with batteries as storage medium for the electrical energy. The latter transforms radiative energy into thermal energy (heat) which can be stored and used on demand. At the level of big power plants a standard setup are (thermal) concentration solar power (CSP) plants. The mostly developed and used CSP technique are parabolic trough power plants (PPTP). There the heat can be stored with high temperatures over hours and days. 
The thermal energy is used to produce electrical energy on demand. Alternatively, it can be used as industrial heat avoiding big efficiency losses due to transformation bewteen different types of energy. 

A typical PPTP consists of a large network of collector pipes located in the focus of parabolic mirrors. The solar radiation is concentrated on the collector pipes where a heat transfer fluid (HTF) is heated
while being pumped through the pipes. The HTF supposed to reach very high temperatures (up to $400^\circ\text{C}$) is used  to  heat a storage medium (typically salt) outside the collector field in a hugh isolated storage tank. From there thermal energy can be extracted on demand e.g. by powering a steam turbine for producing electrical energy. The storage times are typically hours and days to compensate the daily displacement between supply and demand and in addition to compensate potential reduced supply or increased demand beyond the daily cycle.  

The first commercial power plant of this type was built in 1984 (SEGS I) in the US \cite{SEGS}. 
Meanwhile this plant was enlarged up to SEGS IX and has become one of the biggest PPTP's on the world, besides the Solana Generating Station in the US \cite{SGS}. 
The United States and Spain are the leading countries in terms of annual energy production via PPTP's. 
An important African project worth to be mentioned are the NOOR I and II PPTP's in Quarzazate, Marocco \cite{Aqachmar2019,NOORI+II}. 
However, there is many new big PTPP's in planning or under construction in many countries with big annual solar radiative input \cite{PtPPlist}.     
Typically, the collector fields of such big PPTP's cover a few hundreds of hectare of land, they have a power production of a few hundred Mega Watts 
and an annual energy production (e.g. NOOR I and II) reaching the range of a Tera Watt hour ($1 \, TWh/a$).

As far as HTF's is concerned nowadays special oils are used. In the past also water or salt were used due to their high heat capacity. The disadvantage for water are the high pressures which arise at such high temperatures. In case of salt the problem lies in high melting point at above  $220^\circ\text{C}$. 
There is ongoing reseach on possible alternative HTF's, such as nanofluids. We refer to a recent overview paper on this issue \cite{Nawsud2022}.

As far as research for PPTP's models is concerned there is variuos directions. A detailed description of various PPTP reseach directions can be found in the introduction of \cite{BGSP}. There is models for the HTF in single collector pipes based on the energy balance. These models describe the temperature dynamics in the pipe, but the density or pressure dynamics is not considered. On the other hand there is the aim for a complete PPTP network model. There is literature considering the complete PPTP network, mostly with strongly simplified models (e.g. space homogeneous in the single collector pipes). 
A recent general modeling, simulation and optimsation (MSO) approach is given in \cite{BGSP}. There from general thermo-fluid dynamic mass, momentum and energy balance laws an asymptotic (non space homogeneous) model was derived.  On one hand this model still includes the main physical effects and on the other hand it is simple enough for fast and robust simulations which are a crucial prerequisite for  optimisation strategies. In fact, in  \cite{BGSP}  optimisation results are shown for both the planning and the operational phase. Based in the model from  \cite{BGSP} alternative HTF fluids are considered in  \cite{BG23b}.
Let us mention that similar simplifying and asymptotic approaches where used 
for other thermo-fluid dynamics applications, such as gas the dynamics in gas pipelines \cite{BGH11} or in exhaust tubes \cite{GR13,GRW14}.

Here we consider the asymptotic thermo-fluid dynamic model derived and studied in \cite{BGSP}. To start with we consider the well posedness problem of the model for a single collector pipe. This is the basis for further studies on the network problem. To our knowledge there is no well posedness results for this system of nonlinear equations.

The paper is organized as follows. In Section \ref{sec:singlepipe} we present the mathematical problem and the main results. In Section \ref{sec:existence}
the proof for the existence result is given. Section \ref{stationaryproblem} refers to the { long-time behaviour and the} stationary problem.
In Appendix \ref{assumption} the validity of the taken assumptions from the viewpoint of the real world PPTP application are discussed. 
We end the paper with a Conclusion.
%
%
\section{Thermo-fluid dynamics in a single pipe}
\label{sec:singlepipe}
We consider a HTF liquid in a single (thin and long) collector pipe. In \cite{BGSP} a one-dimensional asymptotic thermo-fluid dynamic model was derived. The model describes quantities -- intended as averages over the cross section of the pipe -- which depend on time $t$ and the one dimensional longitudinal extension $x$  of the collector pipe. 
We denote by $\rho,\,u,\,p,\,T$ the density, the velocity, the pressure, the temperature of the HTF fluid, respectively. Then the  model reads
\begin{equation}
\label{eq:PP}
\begin{cases}
\pt\rho+\px(\rho u)=0,&\quad t>0,\,x\in(0,1),\\
\px p=-\alpha \rho u|u|,&\quad t>0,\,x\in(0,1),\\
\px u={\rho^{-2}}(q(t,x)-\beta_1(T-T_{out})-\beta_2(T^4- T_{sky}^4)),&\quad t>0,\,x\in(0,1),\\
T=\gamma-\rho,&\quad t>0,\,x\in(0,1),
\end{cases}
\end{equation}
where $\alpha,\,\beta_1,\,\beta_2,\,\gamma>0$ are constants. The quantities $q, T_{out}, T_{sky}$ are given functions denoting the solar radiative source,
the outside temperature and the background sky temperature, respectively.
The equations (\ref{eq:PP}) represent mass conservation, the momentum  
and the energy balance. The right hand side of the  (second) momentum balance equation is the surface friction term. The right hand side of the third equation - the asymptotic remainder of  the energy balance - contains
the solar radiative source term $q$, the convective loss term $-\beta_1(T-T_{out})$ and the radiative loss term $-\beta_2(T^4- T_{sky}^4)$. 

To shorten the notation we introduce 
\begin{equation}
	f(t,x)  =  q(t,x) + \beta_1 T_{out} + \beta_2 T_{sky}^4, \\
	\label{f01} 
\end{equation}
such that 
\begin{equation}
	q(t,x)-\beta_1(T-T_{out})-\beta_2(T^4- T_{sky}^4)
	=  
	f(t,x) - \beta_1 T - \beta_2 T^4. 
	\label{f02} 
\end{equation}

We augment \eqref{eq:PP} with the initial data
\begin{equation}
\label{eq:CL-ic}
\begin{cases}
u(0,x)=u_0(x),&\quad x\in(0,1),\\
p(0,x)=p_0(x),&\quad x\in(0,1),\\
T(0,x)=T_0(x),&\quad x\in(0,1),
\end{cases}
\end{equation}
and the boundary conditions
\begin{equation}
\label{eq:CL-bc}
\begin{cases}
\rho(t,0)=\rho_l(t),&\quad t>0,\\
\rho(t,1)=\rho_r(t),&\quad t>0,\\
p(t,0)=p_l(t),&\quad t>0,\\
p(t,1)=p_r(t),&\quad t>0.
\end{cases}
\end{equation}
Using the fact that the inverse of the function 
$$G(\xi)=\xi\abs{\xi}$$
is the odd function 
$$g(\xi)=\sgn{\xi}\sqrt{\abs{\xi}}$$
we can reformulate the 
initial-boundary value problem \eqref{eq:PP}, \eqref{eq:CL-ic}, \eqref{eq:CL-bc} as an elliptic-hyperbolic system in $\rho$ and $p$ as follows
\begin{equation}
\label{eq:CL}
\begin{cases}
\pt\rho-\px\left(\frac{g\left(\px p\right)}{\sqrt{\alpha}}\sqrt{{\rho}}\right)=0,&\quad t>0,\,x\in(0,1),\\
-\px \left(g\left(\frac{\px p}{\alpha\rho}\right)\right)=F(t,x,\rho),&\quad t>0,\,x\in(0,1),\\
\rho(t,0)=\rho_l(t),&\quad t>0,\\
\rho(t,1)=\rho_r(t),&\quad t>0,\\
p(t,0)=p_l(t),&\quad t>0,\\
p(t,1)=p_r(t),&\quad t>0,\\
p(0,x)=p_0(x),&\quad x\in(0,1),\\
\rho(0,x)=\rho_0(x),&\quad x\in(0,1),
\end{cases}
\end{equation}
where 
\begin{equation*}
F(t,x,\rho)=\frac{f(t,x)-\beta_1(\gamma-\rho)-\beta_2(\gamma-\rho)^4}{\rho^2},\qquad \rho_0(x)=\gamma-T_0(x).
\end{equation*}

We assume
\begin{align}
\label{eq:ass1} 
&f\in C^2([0,\infty)\times[0,1]),\quad\rho_0\in W^{1,\infty}(0,1),\quad \rho_l,\,\rho_r,\, p_l,\,p_r\in W^{1,\infty}(0,\infty);\\
\label{eq:ass3} 
&\exists\, \gamma_* >0 \>\>\text{s.t.}\>\>  0\le f\le\beta_1(\gamma-\gamma_*)+\beta_2(\gamma-\gamma_*)^4,\quad
 \gamma_*\le \rho_0,\, \rho_l,\,\rho_r\le \gamma. 
\end{align}
For a justification  of the last assumption (\ref{eq:ass3}) see Appendix \ref{assumption}. 
%

Inspired by \cite{zbMATH03651023} we use the following definition of solution.

\begin{definition}
\label{def:sol}
Let $\rho,\,p:[0,\infty)\times(0,1)\to\R$ be functions.
We say that the couple $(\rho,\,p)$ is an entropy solution of \eqref{eq:CL} if 
\begin{Definitions}
\item \label{def:rreg} $\rho\in L^\infty(0,T;BV(0,1)),$ $T>0$;
\item \label{def:rnonsing} $\gamma_*\le\rho\le\gamma$;
\item \label{def:preg} $p\in L^\infty(0,\infty;W^{1,\infty}(0,1)), \,\pxx p \in \mathcal{M}((0,T)\times(0,1)),\,T>0$;
\item \label{def:elleq} for every test function $\vfi\in C^\infty(\R\times(0,1))$ with compact support
\begin{equation}
\label{eq:def-ell}
\int_0^\infty\!\!\int_0^1 g\left(\frac{\px p}{\alpha\rho}\right)\px\vfi dtdx=\int_0^\infty\!\!\int_0^1 F(t,x,\rho)\vfi dtdx;
\end{equation}
\item \label{def:ell-bc} $p(t,0)=p_l(t)$ and $p(t,1)=p_r(t)$ for almost every $t\ge0$;
\item \label{def:hypeq} for every nonnegative test function $\vfi\in C^\infty(\R^2)$ with compact support and every constant $c\in\R$
\begin{equation}
\label{eq:def-hyp}
\begin{split}
\int_0^\infty\!\!\int_0^1 &\left(\abs{\rho-c}\pt\vfi+u\abs{\rho-c}\px\vfi-c\sgn{\rho-c}F(t,x,\rho(t,x))\right)dtdx\\
&-\int_0^\infty u(t,1)\sgn{\rho_r(t)-c}\left(\rho(t,1)-c\right)\vfi(t,1)dt\\
&+\int_0^\infty u(t,0)\sgn{\rho_l(t)-c}\left(\rho(t,0)-c\right)\vfi(t,0)dt\\
&+\int_0^1\abs{\rho_0(x)-c}\vfi(0,x)dx\ge0,
\end{split}
\end{equation}
where
\begin{equation}
\label{eq:defu}
u=-g\left(\frac{\px p}{\alpha\rho}\right).
\end{equation}
\end{Definitions}
\end{definition}

Arguing in \cite{zbMATH03651023} we can prove that
\begin{align*}
u(t,0)\ge 0&\Rightarrow  \rho(t,0)=\rho_l(t),\\
u(t,1)\le 0&\Rightarrow \rho(t,1)=\rho_r(t).
\end{align*}

The main results of this paper are the following existence result for \eqref{eq:CL}.
%
%
\begin{theorem}
\label{th:main}
Assume \eqref{eq:ass1} and \eqref{eq:ass3}. 
The initial boundary value problem \eqref{eq:CL} admits an entropy solution $(\rho,\,p)$
in the sense of Definition \ref{def:sol}.
\end{theorem}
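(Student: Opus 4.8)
\emph{Strategy of the proof.} The plan is to combine a vanishing--viscosity regularisation with a Schauder fixed point, to recover the pointwise bounds $\gamma_*\le\rho\le\gamma$ \emph{a posteriori} from the genuine coupling of the two equations, and then to pass to the limit. I first record that the elliptic equation in \eqref{eq:CL} can be solved explicitly: writing $u=-g(\px p/(\alpha\rho))$ as in \eqref{eq:defu}, that equation becomes $\px u=F(t,x,\rho)$, so $u(t,x)=u(t,0)+\int_0^xF(t,y,\rho(t,y))\,dy$; integrating $\px p=-\alpha\rho u\abs{u}$ and imposing $p(t,0)=p_l(t)$ and $p(t,1)=p_r(t)$ forces $p_l(t)-p_r(t)=\alpha\int_0^1\rho\,u\abs{u}\,dx$. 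Since $\rho\ge\gamma_*>0$ and $\xi\mapsto\xi\abs{\xi}$ is strictly increasing, the right--hand side is a continuous, strictly increasing and surjective function of $u(t,0)$, so $u(t,0)$, hence $u=u[\rho]$ and $p=p[\rho]$, are uniquely determined; a quantitative version of this monotonicity gives Lipschitz dependence of $u[\rho]$ and $p[\rho]$ on $\rho$, and \eqref{eq:ass1}, \eqref{eq:ass3} give uniform $L^\infty(0,T;W^{1,\infty}(0,1))$ bounds for $u[\rho]$, $p[\rho]$ and an $L^\infty(0,T;\mathcal M(0,1))$ bound for $\pxx p[\rho]$. To make this construction global I would truncate $\rho$ to $[\gamma_*,\gamma]$ inside $F$ and inside $\sqrt{\rho}$.

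Next, for $\eps>0$ I consider the regularised coupled system $\pt\re+\px(\re\,u[\re])=\eps\pxx\re$ on $(0,T)\times(0,1)$, with Dirichlet data $\re(\cdot,0)=\rho_l$, $\re(\cdot,1)=\rho_r$ and $\re(0,\cdot)=\rho_0$ (after a harmless mollification making the data smooth and compatible). The map $\mathcal{T}:v\mapsto\re$, where $\re$ solves the \emph{linear} parabolic problem $\pt\re+\px(\re\,u[v])=\eps\pxx\re$ with these data, is continuous and compact on $C([0,T]\times[0,1])$ (parabolic regularity together with the Lipschitz dependence $v\mapsto u[v]$) and, by the standard energy and maximum--principle estimates for a linear parabolic equation with bounded drift, maps a large ball into itself; Schauder's theorem yields a fixed point $\re$, a smooth solution of the truncated regularised system. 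Now, \emph{because $u[\re]$ carries the same $\re$}, the equation reads $\pt\re+u[\re]\,\px\re=-\re\,F(t,x,\re)$ along $\dot x=u[\re]$; since after truncation $F(t,x,\gamma)=f(t,x)/\gamma^2\ge0$ and $F(t,x,\gamma_*)\le0$ by \eqref{eq:ass3}, an ODE comparison for $\max_x\re(t,\cdot)$ and $\min_x\re(t,\cdot)$ together with $\gamma_*\le\rho_0,\rho_l,\rho_r\le\gamma$ gives $\gamma_*\le\re\le\gamma$, so the truncation is inactive and $\re$ solves the original regularised system. It is essential to run the fixed point in a large ball and recover the bounds afterwards: a direct \emph{invariant set} argument for $\mathcal{T}$ fails, because in the decoupled iteration $F$ is evaluated at the previous iterate and need not have the right sign at $\rho=\gamma$ or $\rho=\gamma_*$.

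With $\gamma_*\le\re\le\gamma$ in hand, $F(\cdot,\cdot,\re)$, $u[\re]$ and $p[\re]$ are bounded in $L^\infty(0,T;W^{1,\infty}(0,1))$ and $\pxx p[\re]$ in $L^\infty(0,T;\mathcal M(0,1))$, uniformly in $\eps$. The remaining, and in my view principal, obstacle is a uniform spatial $BV$ bound on $\re$: differentiating the equation in $x$ (or running Kruzhkov's doubling argument for the viscous equation with boundary, in the spirit of \cite{zbMATH03651023}) I would estimate $\norm{\re(t,\cdot)}_{BV(0,1)}$ by a Gronwall inequality in which the explicit $x$--dependence of the flux enters only through $\norm{\px u[\re]}_{L^\infty}=\norm{F(\cdot,\cdot,\re)}_{L^\infty}$, which is already controlled, so that $\px\re$ never appears uncontrolled on the right; the equation then also gives uniform time--continuity of $\re$ in $L^1(0,1)$ (or in a negative Sobolev norm). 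By Helly's selection theorem together with an Aubin--Simon argument a subsequence converges $\re\to\rho$ in $C([0,T];L^1(0,1))$ and a.e., whence $u[\re]\to u[\rho]=:u$ and $p[\re]\to p[\rho]=:p$. Properties \ref{def:rreg}--\ref{def:preg} and the boundary identities \ref{def:ell-bc} pass to the limit; \eqref{eq:def-ell} follows from the weak form of $\px u[\re]=F(\cdot,\cdot,\re)$; and the entropy inequality \eqref{eq:def-hyp} follows by writing the viscous Kruzhkov inequality for $\re$ (the viscous term contributing with the favourable sign) and letting $\eps\to0$, the Dirichlet datum being handled through the trace of $\re$ and the boundary reformulation recalled after Definition \ref{def:sol}. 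Carrying this out for every $T>0$ and a diagonal extraction produces an entropy solution on $[0,\infty)\times(0,1)$ in the sense of Definition \ref{def:sol}.
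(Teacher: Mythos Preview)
Your proposal is correct and follows essentially the same vanishing-viscosity/compactness route as the paper: Lemmas~\ref{lm:maxp}--\ref{lm:ucomp} correspond to your outlined steps (parabolic comparison for $\gamma_*\le\re\le\gamma$, $BV$ bound on $\px\re$ by differentiation and Gronwall, $W^{1,\infty}$ bounds on $\ue$ and $\pe$ via the explicit representation of $u[\rho]$, Aubin--Lions compactness, and limit passage in the Kruzhkov inequalities). Two small remarks: your ``characteristic'' argument for the $\rho$-bounds drops the viscous term $\eps\pxx\re$---what you really want is the parabolic sub/supersolution comparison the paper uses in Lemma~\ref{lm:maxp}, which your max/min argument becomes once you keep $\eps\pxx\re$ with its favourable sign at interior extrema---and be aware that identifying the boundary-flux limits $\lim_k\eps_k\px\rek(t,0)$ and $\lim_k\eps_k\px\rek(t,1)$ when passing to the limit in the entropy inequality is the one genuinely delicate step you have only gestured at; the paper resolves it by testing the conservation law itself against a boundary cutoff $\chi_n$, see \eqref{eq:def-hyp-eps.2}--\eqref{eq:def-hyp-eps.4}.
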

%
%
The formal limit problem of (\ref{eq:PP}) as $t \rightarrow \infty$ is given by 
(the index $\infty$ denotes the variables of the limiting problem) 
\begin{equation}
	\label{eq:PP2lt}
	\begin{cases}
		\px(\rho_{\infty} u_{\infty})=0,&\quad \,x\in(0,1),\\
		\px p_{\infty}=-\alpha \rho_{\infty} u_{\infty}|u_{\infty}|,&\quad \,x\in(0,1),\\
		\px u_{\infty}={\rho_{\infty}^{-2}}(q_{\infty}(x)-\beta_1(T_{\infty}-T_{out})-\beta_2(T_{\infty}^4- T_{sky}^4)),&\quad \,x\in(0,1),\\
		T_{\infty}=\gamma-\rho_{\infty},&\quad \,x\in(0,1),
	\end{cases}
\end{equation}
where $q_{\infty} (x)= \lim\limits_{t\to \infty} q(t,x)$. We have boundary conditions
\begin{equation}
	\label{eq:PP2ltbc}
	\begin{cases}
		\rho_{\infty}(0)=\rho_{l,\infty}  = \lim\limits_{t\to \infty} \rho_l(t), &\mbox{ if } u(0)>0,\\
		\rho_{\infty}(1)=\rho_{r,\infty}  = \lim\limits_{t\to \infty} \rho_r(t), &\mbox{ if } u(1)<0,\\
		p_{\infty}(0)=p_{l,\infty}  = \lim\limits_{t\to \infty} p_l(t),&{}\\
		p_{\infty}(1)=p_{r,\infty}  = \lim\limits_{t\to \infty} p_r(t),&{}
	\end{cases}
\end{equation}
were we assume that the mentioned limits $q_{\infty}, \rho_{l,\infty}, \rho_{r,\infty}, p_{l,\infty}, p_{r,\infty}$ exist. In addition 
$f_{\infty}(x) = q_{\infty}(x) + \beta_1 T_{out} + \beta_2 T_{sky}^4$.
%
%
According to \eqref{eq:CL} we can reformulate the limit problem as an elliptic-hyperbolic system in $\rho_\infty$ and $p_\infty$ as follows
\begin{equation}
	\label{eq:CLlongtime}
	\begin{cases}
		\px\left(\frac{g\left(\px p_\infty\right)}{\sqrt{\alpha}}\sqrt{{\rho_\infty}}\right)=0,&\quad x\in(0,1),\\
		-\px \left(g\left(\frac{\px p_\infty}{\alpha\rho_\infty}\right)\right)=F_\infty(x,\rho_\infty),
		&\quad x\in(0,1),\\
		\rho_\infty(0)=\rho_{l,\infty},&\\
		\rho_\infty(1)=\rho_{r,\infty},&\\
		p_\infty(0)=p_{l,\infty},&\\
		p_\infty(1)=p_{r,\infty}.&
	\end{cases}
\end{equation}
where 
\begin{equation*}
	F_\infty(x,\rho)=\frac{f_\infty(x)-\beta_1(\gamma-\rho_\infty)-\beta_2(\gamma-\rho_\infty)^4}{\rho_\infty^2}.
\end{equation*}
The we can formulate the following result on the long time behavior, where we shall use the following definition of solution for \eqref{eq:CLlongtime}.

\begin{definition}
\label{def:sol-long}
Let $\rho_\infty,\,p_\infty:(0,1)\to\R$ be functions.
We say that the couple $(\rho_\infty,\,p_\infty)$ is an entropy solution of \eqref{eq:CLlongtime} if 
\begin{DefinitionsE}
\item \label{def:rreg-long} $\rho_\infty\in BV(0,1)$;
\item \label{def:rnonsing-long} $\gamma_*\le\rho_\infty\le\gamma$;
\item \label{def:preg-long} $p_\infty\in W^{1,\infty}(0,1),\,\pxx p_\infty\in \mathcal{M}(0,1)$;
\item \label{def:elleq-long} for every test function $\vfi\in C^\infty((0,1))$ with compact support
\begin{align}
\label{eq:def-ell-long}
\int_0^1& \left(\frac{g\left(\px p_\infty\right)}{\sqrt{\alpha}}\sqrt{{\rho_\infty}}\right)\px\vfi dx=0,\\
\int_0^1& g\left(\frac{\px p_\infty}{\alpha\rho}\right)\px\vfi dx=\int_0^1 F_\infty(x,\rho_\infty)\vfi dx;
\end{align}
\item \label{def:ell-bc-long} $p_\infty(0)=p_{l,\infty}$ and $p_\infty(1)=p_{r,\infty}$ in the sense of traces.
\end{DefinitionsE}
\end{definition}

\begin{remark}
\label{rm:BV}
It is important to remember that
\begin{equation*}
\pxx p_\infty\in \mathcal{M}(0,1)\Leftrightarrow \px p_\infty\in BV(0,1).
\end{equation*}
{ and consequently 
\begin{equation*}
	\pxx p \in L^{\infty}(0,\infty;\mathcal{M}(0,1))\Leftrightarrow \px p_\infty\in L^{\infty}(0,\infty;BV(0,1)).
\end{equation*}
}
\end{remark}

%
%
\begin{theorem}
	\label{th:main02}
Assume \eqref{eq:ass1} and \eqref{eq:ass3}. Let $(\rho,\,p)$ be an entropy solution of \eqref{eq:CL}  in the sense of Definition \ref{def:sol}.
There exist $\{t_k\}_k\subset(0,\infty),\,t_k\to\infty,\,  \rho_{\infty} \in L^\infty(0,1),\,u_\infty,\,p_\infty\in W^{1,\infty}(0,1)$ such that
	\begin{align}
		\rho(t_k,\cdot)& \rightharpoonup   \rho_{\infty} 
		\text{ weakly in } L^r(0,1), \, 1 < r < \infty,  \label{stab01} \\
		u(t_k,\cdot) & \rightarrow  u_{\infty} 
		\text{ strongly in } C([0,1])
		\text{and  weakly in } W^{1,r}(0,1), \, 1 < r < \infty,   \label{stab02} \\
		p(t_k,\cdot) & \rightharpoonup  p_{\infty} \text{ strongly in } C([0,1])
		\text{and  weakly in } W^{1,r}(0,1), \, 1 < r < \infty.  \label{stab04} 
	\end{align}    
	If  in addition  we assume that $\pxx p$  belong in $L^{\infty}(0,\infty;\mathcal{M}(0,1))$ we can conclude that the convergence is along all the family $t\to\infty$,
\begin{align}
	\rho(t,\cdot)& \to   \rho_{\infty} 
		\text{ strongly in } L^r(0,1), \, 1 < r < \infty,  \label{eq:stab01} \\
		u(t,\cdot) & \rightarrow  u_{\infty} 
		\text{ strongly in } W^{1,r}(0,1), \, 1 < r < \infty,   \label{eq:stab02} \\
\label{eq:stab05}
p(t,\cdot) &\to  p_{\infty} 
		\text{ strongly in } W^{1,r}(0,1), \, 1 < r < \infty,  
\end{align}	
and 	the limit  couple $(\rho_\infty,\,p_\infty)$ is the {unique}  entropy solution of \eqref{eq:CLlongtime} 
	in the sense of Definition \ref{def:sol-long}. 
%
\end{theorem}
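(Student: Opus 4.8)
The plan is to combine time‑uniform a priori bounds, soft compactness for the first (subsequential, weak) part of the statement, a $BV$‑in‑$x$ upgrade together with a time‑compactness argument for the part under the extra hypothesis, and finally a Lyapunov/LaSalle argument plus uniqueness for \eqref{eq:CLlongtime} to pin down the limit. I would first record the bounds that do not degenerate in time. For an entropy solution, \ref{def:rnonsing} gives $\gamma_*\le\rho\le\gamma$ and \ref{def:preg} gives $\px p\in L^\infty(0,\infty;L^\infty(0,1))$, so by \eqref{eq:defu} and $\rho\ge\gamma_*$ the velocity $u$ is bounded in $L^\infty(0,\infty;L^\infty(0,1))$. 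Since $\px u=F(t,x,\rho)$ and $|\rho^2 F(t,x,\rho)|=|f-\beta_1(\gamma-\rho)-\beta_2(\gamma-\rho)^4|\le\beta_1(\gamma-\gamma_*)+\beta_2(\gamma-\gamma_*)^4$ by \eqref{eq:ass3} (both $f$ and $\beta_1(\gamma-\rho)+\beta_2(\gamma-\rho)^4$ are nonnegative and bounded above by the same constant when $\gamma_*\le\rho\le\gamma$), $u$ is in fact bounded in $L^\infty(0,\infty;W^{1,\infty}(0,1))$, and then $\px p=-\alpha\rho u|u|$ bounds $p$ in $L^\infty(0,\infty;W^{1,\infty}(0,1))$, all bounds depending only on $\alpha,\beta_1,\beta_2,\gamma,\gamma_*$ and the data. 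Given any $t_k\to\infty$, Banach--Alaoglu yields along a subsequence $\rho(t_k,\cdot)\rightharpoonup\rho_\infty$ weakly-$*$ in $L^\infty(0,1)$, hence weakly in every $L^r(0,1)$; since $W^{1,\infty}(0,1)$ embeds compactly into $C([0,1])$, along a further subsequence $u(t_k,\cdot)$ and $p(t_k,\cdot)$ converge in $C([0,1])$ and weakly-$*$ in $W^{1,\infty}(0,1)$, and lower semicontinuity places $u_\infty,p_\infty$ in $W^{1,\infty}(0,1)$. This gives \eqref{stab01}--\eqref{stab04}.

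Under the extra assumption $\pxx p\in L^\infty(0,\infty;\mathcal{M}(0,1))$, equivalently (Remark \ref{rm:BV}) $\px p\in L^\infty(0,\infty;BV(0,1))$, I would first upgrade \ref{def:rreg} to a time‑uniform bound $\rho\in L^\infty(0,\infty;BV(0,1))$; inspecting the $BV$ estimate of Section \ref{sec:existence} one sees $\mathrm{TV}\,\rho(t,\cdot)$ is controlled only by $\sup_{s\le t}\mathrm{TV}\,\px p(s,\cdot)$, the data, and the $W^{1,\infty}$ bound on $u$ (alternatively one argues pointwise in $t$ from $\rho u^2=\alpha^{-1}|\px p|$ and $u\in W^{1,\infty}$ on the intervals where $u(t,\cdot)\neq0$, noting that on $\{u(t,\cdot)=0\}$ one has $F(t,x,\rho)=0$, so $\rho(t,\cdot)$ agrees there with the $C^2$ function implicitly defined by $F(t,x,\cdot)=0$). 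Then the mass flux $\rho u$, being the product of the Lipschitz function $u$ with the $BV$ function $\rho$, is bounded in $BV(0,1)$ uniformly in $t$, so $\pt\rho=-\px(\rho u)$ is bounded in $L^\infty(0,\infty;\mathcal{M}(0,1))$ and the orbit $\{\rho(t,\cdot)\}_{t\ge0}$ is relatively compact in $C_{\mathrm{loc}}([0,\infty);L^1(0,1))$. Hence every $t_k\to\infty$ has a subsequence along which the translates converge, $\rho(t_k+\cdot,\cdot)\to\hat\rho(\cdot)$ in $C_{\mathrm{loc}}([0,\infty);L^1(0,1))$ and $u(t_k+\cdot,\cdot)\to\hat u$, $p(t_k+\cdot,\cdot)\to\hat p$ in $C_{\mathrm{loc}}([0,\infty);C([0,1]))$; passing to the limit in the nonlinear terms of \eqref{eq:def-ell} and \eqref{eq:def-hyp} using the strong $L^1$ convergence, together with $f(t_k+s,x)\to f_\infty(x)$ boundedly and $\rho_l(t_k+\cdot)\to\rho_{l,\infty}$, etc., one finds that $(\hat\rho,\hat p)$ is an entropy solution, in the sense of Definition \ref{def:sol}, of \eqref{eq:CL} with $f,\rho_l,\rho_r,p_l,p_r$ replaced by $f_\infty,\rho_{l,\infty},\rho_{r,\infty},p_{l,\infty},p_{r,\infty}$ and initial datum $\hat\rho(0)=\rho_\infty$. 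Evaluating at $s=0$ and using $\px u=F$ and $\px p=-\alpha\rho u|u|$ to upgrade the convergence of $u(t_k,\cdot),p(t_k,\cdot)$ to the strong $W^{1,r}$ topologies, one obtains \eqref{eq:stab01}--\eqref{eq:stab05} along this subsequence.

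It then remains to show $\hat\rho$ is independent of $s$ and that \eqref{eq:CLlongtime} has a unique entropy solution; once this is done, the limit does not depend on $t_k$, so the whole family $t\to\infty$ converges and, by the previous step, in the strong topologies of \eqref{eq:stab01}--\eqref{eq:stab05}. For the stationarity of $\hat\rho$ I would run a Lyapunov/LaSalle argument for the autonomous limit problem: \eqref{eq:CLlongtime} admits an entropy solution $(\rho_\infty^0,p_\infty^0)$ (constructed from the reduced problem described next), which is a time‑independent entropy solution of the autonomous limit problem, and the Bardos--LeRoux--N\'ed\'elec $L^1$‑contraction makes $s\mapsto\|\hat\rho(s,\cdot)-\rho_\infty^0\|_{L^1(0,1)}$ non‑increasing; its dissipation, coming from the outflow boundary and from the term $F_\infty(x,\cdot)$ (strictly increasing in $\rho$), vanishes only for $\hat\rho(s,\cdot)\equiv\rho_\infty^0$, so $\hat\rho(s,\cdot)=\rho_\infty^0$ for all $s$. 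For uniqueness, the two elliptic identities of Definition \ref{def:sol-long} reduce \eqref{eq:CLlongtime} to an ODE: the first forces the mass flux $\rho_\infty u_\infty$ to be a constant $c$, which fixes $\px p_\infty$ as an explicit function of $\rho_\infty$ and $c$, and the second then becomes a first‑order ODE for $\rho_\infty$ with locally Lipschitz right‑hand side, whose solution is determined by the boundary value selected by $\mathrm{sgn}\,c$ as in \eqref{eq:PP2ltbc}; finally $c$ is fixed by the scalar equation $p_{r,\infty}-p_{l,\infty}=\int_0^1\px p_\infty\,dx$, which is uniquely solvable (using the monotonicity one checks), while $c=0$, possible only if $p_{l,\infty}=p_{r,\infty}$, forces $F_\infty(x,\rho_\infty)=0$ and hence is also unique. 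Thus $(\rho_\infty,p_\infty)=(\rho_\infty^0,p_\infty^0)$ is the unique entropy solution of \eqref{eq:CLlongtime}.

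I expect the real obstacle to be the stationarity step, i.e.\ promoting the non‑increasing functional $s\mapsto\|\hat\rho(s,\cdot)-\rho_\infty^0\|_{L^1(0,1)}$ to a \emph{strict} Lyapunov functional for the autonomous limit problem: since the velocity $u$ --- in particular its inflow value $u(t,0)$ --- is a nonlocal functional of $\rho(t,\cdot)$ obtained by solving the elliptic equation together with the pressure boundary conditions, the interior and boundary dissipation identities for scalar conservation laws on an interval have to be reworked for this coupling, and the boundary contributions --- governed by the implications $u(t,0)\ge0\Rightarrow\rho(t,0)=\rho_l(t)$ and $u(t,1)\le0\Rightarrow\rho(t,1)=\rho_r(t)$ --- must be controlled uniformly as $t\to\infty$. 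A secondary difficulty is the passage of the boundary traces of $\rho$ to the limit, which requires the convergence of $\rho(t_k,\cdot)$ to be compatible with $BV$‑traces rather than merely in $L^1$.
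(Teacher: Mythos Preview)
Your subsequential part coincides with the paper's: time-uniform bounds $\rho\in L^\infty$, $u,p\in L^\infty(0,\infty;W^{1,\infty}(0,1))$, then Banach--Alaoglu and the compact embedding $W^{1,\infty}\hookrightarrow C$ give \eqref{stab01}--\eqref{stab04}.

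For the second part the paper takes a much shorter route to \eqref{eq:stab01} than your primary plan. Rather than upgrading $\rho$ to a time-uniform $BV$ bound (which, as you note, the Gronwall-type estimate of Lemma~\ref{lm:uBV} does not give directly) and then running Aubin--Lions on the orbit, the paper exploits the extra hypothesis in the most direct way: $\px p\in L^\infty(0,\infty;BV(0,1))$ yields, by Helly, strong $L^r$ convergence of $\px p(t_k,\cdot)$; combined with the already-available strong $C([0,1])$ convergence of $u(t_k,\cdot)$ and the algebraic identity $\px p=-\alpha\rho\,u|u|$, this forces $\rho(t_k,\cdot)\to\rho_\infty$ strongly in $L^r(0,1)$ without estimating $\mathrm{TV}\,\rho(t,\cdot)$ at all. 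Your parenthetical ``alternative'' (recover $\rho$ pointwise from $\rho u^2=\alpha^{-1}|\px p|$) \emph{is} essentially the paper's argument; you should promote it to the main line and drop the detour through a uniform $BV$ bound on $\rho$ and the $\mathcal M$-bound on $\pt\rho$.

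After that the paper simply says that strong convergence of $\rho$ allows one to pass to the limit in both weak formulations \eqref{eq:def-ell} and \eqref{eq:def-hyp}, producing a solution of \eqref{eq:CLlongtime}, and then proves uniqueness of the stationary problem by reducing to a scalar equation for the constant flux $j=\rho_\infty u_\infty$ and checking that $p_{r,\infty}-p_{l,\infty}$ is strictly monotone in $j$ --- exactly the argument you sketch. Uniqueness then promotes the subsequence convergence to the full family $t\to\infty$.

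So the paper uses neither translates, $\omega$-limit sets, nor any Lyapunov/LaSalle machinery. What your approach buys is an explicit mechanism for the one step the paper treats tersely: why the subsequential limit satisfies the \emph{first} stationary identity $\px(\rho_\infty u_\infty)=0$. The paper's ``pass to the limit in \eqref{eq:def-hyp}'' is not spelled out --- \eqref{eq:def-hyp} is a time-integrated inequality, and convergence at a sequence of time-slices does not by itself kill the $\pt$-term. Your LaSalle argument with the $L^1$ distance to a stationary solution is one principled way to close that gap; the obstacle you flag (making the Bardos--LeRoux--N\'ed\'elec contraction \emph{strict} when the advecting velocity is a nonlocal functional of $\rho$) is genuine, and is precisely the point the paper's proof does not address in detail.
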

\begin{remark}
\label{remark:main}
From Theorem \ref{th:main02} we see that the result on the  long time behaviour is not complete. However, assuming a slightly better bound on the pressure $p$  we can prove the convergence to (\ref{eq:CLlongtime}).
\end{remark}
%
%
\section{Proof of Theorem \ref{th:main}}
\label{sec:existence}
The main result of this section is the  existence result for \eqref{eq:CL} stated in Theorem \ref{th:main}.
Our argument is based on a compactness result on the family of solutions $\{(\re,\,\pe)\}_{\eps>0}$ of the following vanishing viscosity type approximation of 
\eqref{eq:CL}

\begin{equation}
\label{eq:CLeps}
\begin{cases}
\pt\re-\px\left(\frac{g\left(\px \pe\right)}{\sqrt{\alpha}}\sqrt{{\re}}\right)=\eps\pxx\re,&\quad t>0,\,x\in(0,1),\\
-\px \left(g\left(\frac{\px \pe}{\alpha\re}\right)\right)=F(t,x,\re),&\quad t>0,\,x\in(0,1),\\
\re(t,0)=\rho_{l,\eps}(t),&\quad t>0,\\
\re(t,1)=\rho_{r,\eps}(t),&\quad t>0,\\
\pe(t,0)=p_{l,\eps}(t),&\quad t>0,\\
\pe(t,1)=p_{r,\eps}(t),&\quad t>0,\\
p(0,x)=p_{0,\eps}(x),&\quad x\in(0,1),\\
\rho(0,x)=\rho_{0,\eps}(x),&\quad x\in(0,1),
\end{cases}
\end{equation}
where $\rho_{l,\eps},\, \rho_{r,\eps},\,p_{l,\eps},\,p_{r,\eps},\, p_{0,\eps},\, \rho_{0,\eps}$ are smooth approximations of 
$\rho_{l},\, \rho_{r},\,p_{l},\,p_{r},\, p_{0},\, \rho_{0}$ satisfying \eqref{eq:ass1} and \eqref{eq:ass3}.
The existence of smooth solutions for \eqref{eq:CLeps} can be proved following the same argument of 
\cite{zbMATH07144663, zbMATH07634966, zbMATH02215273}.

Introducing the function
\begin{equation}
\label{eq:defueps}
\ue=-g\left(\frac{\px \pe}{\alpha\re}\right)
\end{equation}
the following equations hold
\begin{equation}
\label{eq:PPeps}
\begin{cases}
\pt\re+\px(\re \ue)=\eps\pxx\re,&\quad t>0,\,x\in(0,1),\\
\px \pe=-\alpha \re \ue|\ue|,&\quad t>0,\,x\in(0,1),\\
\px \ue=F(t,x,\re),&\quad t>0,\,x\in(0,1).
\end{cases}
\end{equation}

In what follows we denote with $c$ and $C$ all the constants independent on $t$ and $\eps$.

The following lemmas are needed.

\begin{lemma}
\label{lm:maxp}
The following estimates hold
\begin{equation}
\label{eq:maxp}
0<\gamma_*\le\re\le\gamma.
\end{equation}
\end{lemma}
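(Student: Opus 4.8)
The plan is to obtain \eqref{eq:maxp} as a maximum-principle estimate for the parabolic equation solved by $\re$, exploiting that the smooth approximants $\rho_{0,\eps},\rho_{l,\eps},\rho_{r,\eps},f$ still satisfy \eqref{eq:ass1}--\eqref{eq:ass3}. First I would rewrite the density equation of \eqref{eq:PPeps} in non-divergence form: since $\px(\re\ue)=\ue\px\re+\re\px\ue$ and $\px\ue=F(t,x,\re)$,
\begin{equation*}
\pt\re+\ue\px\re-\eps\pxx\re \;=\; -\re F(t,x,\re)\;=\;\frac{\phi(\re)-f}{\re},\qquad \phi(\xi):=\beta_1(\gamma-\xi)+\beta_2(\gamma-\xi)^4,
\end{equation*}
valid wherever $\re>0$. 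The two facts that do all the work are: $\phi(\gamma)=0$ and $\phi'(\xi)=-\beta_1-4\beta_2(\gamma-\xi)^3<0$ for $\xi<\gamma$ (so $\phi$ is strictly decreasing there), and $0\le f\le\beta_1(\gamma-\gamma_*)+\beta_2(\gamma-\gamma_*)^4=\phi(\gamma_*)$ by \eqref{eq:ass3}. Hence at a point where $\re=\gamma$ the right-hand side equals $-f/\gamma\le0$, and at a point where $\re=\gamma_*$ it equals $(\phi(\gamma_*)-f)/\gamma_*\ge0$; these are exactly the signs one needs.

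To turn these sign facts into the pointwise bound I would run a barrier argument. Fix $T$ in the existence interval of the smooth solution. For the upper bound set $w_\delta:=\re-\gamma-\delta t$, $\delta>0$; then $w_\delta\le0$ on the parabolic boundary of $(0,T)\times(0,1)$, and if $w_\delta$ were positive somewhere, at a first point $(t_0,x_0)$ with $\max_{[0,1]}w_\delta(t_0,\cdot)=0$ one has $x_0\in(0,1)$, $\px\re=0$ (so the transport term drops out), $\pxx\re\le0$, $\pt\re\ge\delta$, and $\re=\gamma+\delta t_0$; the equation and $f\ge0$ then force $\delta\le\phi(\gamma+\delta t_0)/(\gamma+\delta t_0)$, which is negative as soon as $\beta_2\delta^3T^3<\beta_1$---a contradiction. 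Letting $\delta\downarrow0$ gives $\re\le\gamma$. The lower bound is the mirror image with $v_\delta:=\re-\gamma_*+\delta t$: at a hypothetical first interior spatial minimum one gets $\pt\re\le-\delta$ while the equation, using $f\le\phi(\gamma_*)$ and the strict monotonicity of $\phi$ on $(-\infty,\gamma)$, gives $\pt\re\ge(\phi(\gamma_*-\delta t_0)-\phi(\gamma_*))/(\gamma_*-\delta t_0)>0$ (note $\gamma_*-\delta t_0>0$ for $\delta$ small), again impossible; $\delta\downarrow0$ yields $\re\ge\gamma_*$.

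The only delicate point is that the equation in the form above---and the very definition \eqref{eq:defueps} of $\ue$---only makes sense while $\re>0$, which is precisely (a consequence of) what we are trying to prove. This is not a genuine obstacle: a smooth solution of \eqref{eq:CLeps} necessarily keeps $\re>0$ (otherwise $F$ and $\ue$ blow up), hence on each compact subinterval $\re$ is bounded below by a positive constant, so the barrier argument applies on all of $[0,T]$; alternatively one may write $(\phi(\re)-f)/\re$ as $c(t,x)(\re-\gamma)-f/\re$ with $c$ bounded and $-f/\re\le0$, resp.\ as $\tilde c(t,x)(\re-\gamma_*)+(\phi(\gamma_*)-f)/\re$ with $\tilde c$ bounded and $(\phi(\gamma_*)-f)/\re\ge0$, and invoke the classical weak maximum principle for uniformly parabolic operators with bounded coefficients. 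I expect this bookkeeping (together with the routine handling of boundary-touching in the barrier step) to be the most technical part; the heart of the proof---the sign of $\phi(\re)-f$ at $\re=\gamma$ and at $\re=\gamma_*$---is immediate from \eqref{eq:ass3}.
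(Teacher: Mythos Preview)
Your argument is correct and is precisely the approach the paper takes, only spelled out in more detail: the paper rewrites the first equation of \eqref{eq:CLeps} in the non-divergence form \eqref{eq:r} (your displayed equation), observes that the constants $\gamma$ and $\gamma_*$ are super- and sub-solutions of the resulting parabolic problem thanks to \eqref{eq:ass3} (exactly your sign computations for $\phi(\gamma)-f$ and $\phi(\gamma_*)-f$), and concludes by comparison. Your explicit barrier argument and the remark on the $\re>0$ bootstrap merely make explicit what the paper's one-line invocation of the maximum principle leaves implicit.
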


\begin{proof}
We can rewrite the first equation in \eqref{eq:CLeps} as follows
\begin{equation}
\label{eq:r}
\pt\re-g\left(\frac{\px \pe}{\alpha\re}\right)\px\re+\re F(t,x,\re)=\eps\pxx\re.
\end{equation}
Therefore $\re$ is the solution of the following initial boundary value problem
\begin{equation}
\label{eq:v}
\begin{cases}
\displaystyle\pt v-g\left(\frac{\px \pe}{\alpha\re}\right)\px v+v F(t,x,v)=\eps\pxx v,&\quad t>0,\,x\in(0,1),\\[5pt]
v(t,0)=\rho_{l,\eps}(t),&\quad t>0,\\
v(t,1)=\rho_{r,\eps}(t),&\quad t>0,\\
v(0,x)=\rho_{0,\eps}(x),&\quad x\in(0,1).
\end{cases}
\end{equation}
Since, thanks to \eqref{eq:ass3}, $\gamma$ and $\gamma_*$ provide a super and a subsolution for \eqref{eq:v}, the claim is proved.
\end{proof}

\begin{lemma}
\label{lm:uBV} 
The family $\{\px\re\}_{\eps>0}$ is bounded in $L^{\infty}(0,T;L^1(0,1))$ for every $T\ge0$.
\end{lemma}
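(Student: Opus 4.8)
The plan is to obtain a Gronwall inequality for $t\mapsto\int_0^1|\px\re(t,x)|\,dx$ by differentiating the equation for $\re$ in $x$. By the cited references $\re$ is a classical solution of \eqref{eq:CLeps}, smooth up to the spatial boundary, so I may rewrite \eqref{eq:r}, using \eqref{eq:defueps}, as $\pt\re+\ue\px\re+H(t,x,\re)=\eps\pxx\re$, where $H(t,x,\rho):=\rho F(t,x,\rho)=\rho^{-1}\bigl(f(t,x)-\beta_1(\gamma-\rho)-\beta_2(\gamma-\rho)^4\bigr)$ and $\ue$ moreover satisfies $\px\ue=F(t,x,\re)$ by \eqref{eq:PPeps}. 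Setting $w:=\px\re$ and differentiating, $w$ solves
\[
\pt w+\ue\px w+(F+H_\rho)\,w+H_x=\eps\pxx w\qquad\text{in }(0,T)\times(0,1),
\]
with $H_x,H_\rho$ the partial derivatives of $H(t,x,\rho)$ evaluated at $\rho=\re$.

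The next step is the usual Kruzhkov-type regularization: multiply this equation by $\eta_\delta'(w)$, where $\eta_\delta$ is a smooth even convex approximation of $|\cdot|$ with $|\eta_\delta'|\le1$ and $\eta_\delta''\ge0$; integrate over $(0,1)$; integrate by parts in the transport term (using $\px\ue=F$) and in the viscous term (dropping the nonpositive term $-\eps\int_0^1\eta_\delta''(w)(\px w)^2\,dx$); and let $\delta\to0$. Lemma \ref{lm:maxp} guarantees that $F$ and $F+H_\rho$ are bounded on $[0,T]\times[0,1]\times[\gamma_*,\gamma]$, while $H_x=\px f/\re$ together with $\re\ge\gamma_*>0$ gives $\int_0^1|H_x(t,x,\re)|\,dx\le C$ (with $C$ depending on $T$ only through $\norm{\px f}_{L^\infty([0,T]\times[0,1])}$ and the data). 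This leads to
\[
\frac{d}{dt}\int_0^1|w|\,dx\le\Bigl[\eps\,\px w\,\sgn{w}-\ue\,|w|\Bigr]_0^1+C\int_0^1|w|\,dx+C .
\]

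The step I expect to be the main obstacle is controlling the boundary bracket, since at an outflow endpoint there is a viscous boundary layer and neither $|w|$ nor $\eps\,\px w$ is individually $O(1)$ there. The resolution is that the two dangerous contributions cancel once one evaluates \eqref{eq:r} \emph{on} the boundary: at $x=1$, using $\re(t,1)=\rho_{r,\eps}(t)$ (hence $\pt\re(t,1)=\rho_{r,\eps}'(t)$) yields $\eps\,\px w(t,1)=\rho_{r,\eps}'(t)+\ue(t,1)\,w(t,1)+H(t,1,\rho_{r,\eps}(t))$, so that
\[
\eps\,\px w(t,1)\,\sgn{w(t,1)}-\ue(t,1)\,|w(t,1)|=\bigl(\rho_{r,\eps}'(t)+H(t,1,\rho_{r,\eps}(t))\bigr)\sgn{w(t,1)}\le C ,
\]
and symmetrically at $x=0$; here I use that the smooth approximants $\rho_{l,\eps},\rho_{r,\eps}$ may be chosen with Lipschitz norms bounded uniformly in $\eps$, and that $H$ is bounded on the relevant compact set (recall $\gamma_*\le\rho_{l,\eps},\rho_{r,\eps}\le\gamma$ by \eqref{eq:ass3}). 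Hence the boundary bracket is $\le C$, uniformly in $\eps$.

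Finally, applying Gronwall's lemma to $\frac{d}{dt}\int_0^1|w|\,dx\le C\int_0^1|w|\,dx+C$ on $[0,T]$, together with $\int_0^1|w(0,x)|\,dx=\norm{\rho_{0,\eps}'}_{L^1(0,1)}\le C$ (the approximants $\rho_{0,\eps}$ of $\rho_0\in W^{1,\infty}(0,1)$ being taken with uniformly bounded derivative), yields $\int_0^1|\px\re(t,\cdot)|\,dx\le C(T)$ for all $t\in[0,T]$, uniformly in $\eps$, which is the assertion. The only genuinely delicate point is the boundary cancellation above; the remaining ingredients — the $\delta\to0$ passage in the regularized identity and bookkeeping of constants — are routine.
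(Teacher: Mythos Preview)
Your proof is correct and follows essentially the same route as the paper's: a BV-in-$x$ estimate via the sign of $\px\re$, with the bounded lower-order terms supplied by Lemma~\ref{lm:maxp} and \eqref{eq:ass1}, and Gronwall to close. The only difference is presentational: you differentiate \eqref{eq:r} in $x$ first and then need the boundary cancellation $\eps\,\px w\,\sgn{w}-\ue|w|=\sgn{w}\bigl(\rho'_{\cdot,\eps}+H\bigr)$ by evaluating the equation at $x=0,1$, whereas the paper writes $\frac{d}{dt}\int_0^1|\px\re|\,dx=\int_0^1\sgn{\px\re}\,\px(\pt\re)\,dx$ and integrates by parts once, so the boundary terms appear directly as $\sgn{\px\re}\,\pt\re\big|_0^1=\sgn{\px\re}\,\rho'_{\cdot,\eps}\big|_0^1$; the $H$-boundary contributions you find also arise in the paper after one further integration by parts of $\int\re F\,\px\sgn{\px\re}\,dx$, and all interior terms match.
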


\begin{proof}
Using \eqref{eq:ass1}, \eqref{eq:ass3}, \eqref{eq:r} and Lemma \ref{lm:maxp}
\begin{align*}
\frac{d}{dt}\int_0^1&\abs{\px\re}dx=\int_0^1\sgn{\px\re}\ptx\re dx\\
=&\sgn{\px\re(t,1)}\pt\rho_{r,\eps}(t)-\sgn{\px\re(t,0)}\pt\rho_{l,\eps}(t)-\int_0^1\pt\re\pxx\re\delta_{\{\px\re=0\}}dx\\
=&{\sgn{\px\re(t,1)}\pt\rho_{r,\eps}(t)}
{-\sgn{\px\re(t,0)}\pt\rho_{l,\eps}(t)} \underbrace{-\eps\int_0^1(\pxx\re)^2\delta_{\{\px\re=0\}}dx}_{\le0}\\
&\underbrace{-\int_0^1 g\left(\frac{\px \pe}{\alpha\re}\right)\px\re\pxx\re\delta_{\{\px\re=0\}}dx}_{=0}+
\int_0^1\re F(t,x,\re)\underbrace{\pxx\re\delta_{\{\px\re=0\}}}_{=\px\sgn{\px\re}}dx\\
\le &\abs{\pt\rho_{r,\eps}(t)}+\abs{\pt\rho_{l,\eps}(t)}+\rho_{r,\eps}(t) F(t,1,\rho_{r,\eps}(t))\sgn{\px\re(t,1)}\\
&-\rho_{l,\eps}(t) F(t,0,\rho_{l,\eps}(t))\sgn{\px\re(t,0)}\\
&+\int_0^1\abs{\px\re}\left( F(t,x,\re)+\re \partial_\rho F(t,x,\re)\right)dx\\
&+\int_0^1\re\px F(t,x,\re)\sgn{\px\re}dx\\
\le &\abs{\pt\rho_{r,\eps}(t)}+\abs{\pt\rho_{l,\eps}(t)}+\rho_{r,\eps}(t) \abs{F(t,1,\rho_{r,\eps}(t))}+\rho_{l,\eps}(t) \abs{F(t,0,\rho_{l,\eps}(t))}\\
&+\int_0^1\abs{\px\re}\abs{F(t,x,\re)+\re \partial_\rho F(t,x,\re)}dx+\int_0^1\re\abs{\px F(t,x,\re)}dx\\
\le &c+c\int_0^1\abs{\px\re}dx,
\end{align*}
where $\delta_{\{\px\re=0\}}$ is the Dirac delta concentrated on the set ${\{\px\re=0\}}$.
The Gronwall Lemma gives the claim.
\end{proof}

\begin{lemma}
\label{lm:pxl32}
The family $\{\pe\}_{\eps>0}$ is bounded in $L^\infty(0,\infty;W^{1,3/2}(0,1))$.
\end{lemma}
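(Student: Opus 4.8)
The plan is to control $\px\pe$ through the second equation of \eqref{eq:PPeps}, namely $\px\pe = -\alpha\re\ue|\ue|$, after first bounding $\ue$ in a suitable space. From the third equation of \eqref{eq:PPeps}, $\px\ue = F(t,x,\re)$, and Lemma \ref{lm:maxp} together with the structure of $F$ and assumption \eqref{eq:ass1} show that $F(t,x,\re)$ is bounded in $L^\infty((0,\infty)\times(0,1))$ uniformly in $\eps$ (since $\gamma_*\le\re\le\gamma$ keeps $\re$ away from $0$ and keeps $(\gamma-\re)$ bounded, while $f$ is continuous on $[0,\infty)\times[0,1]$ — here I would note that we only need local-in-time bounds if $f$ is merely $C^2$, but the assumption gives a genuine uniform bound if $f$ is additionally bounded; more carefully, one works on $[0,T]$ and the constants may depend on $T$, OR one invokes \eqref{eq:ass3} which forces $0\le f\le\beta_1(\gamma-\gamma_*)+\beta_2(\gamma-\gamma_*)^4$, a bound uniform in $t$). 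Hence $\norm{\px\ue(t,\cdot)}_{L^\infty(0,1)}\le C$ uniformly in $t$ and $\eps$.

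Next I would obtain a pointwise-in-space bound on $\ue$ itself. Integrating $\px\ue=F$ from a boundary point or from an interior reference point gives $\ue(t,x)=\ue(t,x_0)+\int_{x_0}^x F(t,y,\re(t,y))\,dy$, so it remains to control $\ue$ at one point. This is where the pressure boundary conditions enter: from $\px\pe=-\alpha\re\ue|\ue|$ and $\gamma_*\le\re\le\gamma$ we have $|\ue|^2 = |\px\pe|/(\alpha\re)$, and integrating this identity over $(0,1)$ gives $\int_0^1|\ue(t,x)|^2\,dx \le \frac{1}{\alpha\gamma_*}\int_0^1|\px\pe(t,x)|\,dx$. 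The total variation of $\pe$ in $x$ is $\int_0^1|\px\pe|\,dx$; since $\px\pe = -\alpha\re\ue|\ue|$ does not obviously have bounded variation a priori, instead I would use that $\pe(t,1)-\pe(t,0) = \int_0^1\px\pe\,dx = -\alpha\int_0^1\re\ue|\ue|\,dx$, and the left side is $p_{r,\eps}(t)-p_{l,\eps}(t)$, which is bounded by \eqref{eq:ass1}. Combining this with the near-constancy of $|\ue|$ coming from the uniform Lipschitz bound $\norm{\px\ue}_{L^\infty}\le C$: writing $M(t)=\max_{x\in[0,1]}|\ue(t,x)|$ and $m(t)=\min_{x\in[0,1]}|\ue(t,x)|$, we have $M(t)-m(t)\le C$, so $\re\ue|\ue|\ge \gamma_* m(t)^2\,\sgn{\ue}$ on the set where $\ue$ has constant sign. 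If $\ue$ changes sign then $m(t)=0$ and $M(t)\le C$ directly; if $\ue$ has constant sign, then $|p_{r,\eps}(t)-p_{l,\eps}(t)|\ge \alpha\gamma_* m(t)^2 \ge \alpha\gamma_*(M(t)-C)^2$ when $M(t)>C$, giving $M(t)\le C + \sqrt{|p_{r,\eps}(t)-p_{l,\eps}(t)|/(\alpha\gamma_*)}\le C'$ uniformly in $t,\eps$. Thus $\norm{\ue(t,\cdot)}_{L^\infty(0,1)}\le C$.

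With $\ue$ uniformly bounded in $L^\infty$, the identity $\px\pe=-\alpha\re\ue|\ue|$ immediately yields $\norm{\px\pe(t,\cdot)}_{L^\infty(0,1)}\le \alpha\gamma\,\norm{\ue(t,\cdot)}_{L^\infty(0,1)}^2\le C$, which in fact gives a bound in $L^\infty(0,\infty;W^{1,\infty}(0,1))$ — stronger than the claimed $W^{1,3/2}$ — once we also control $\pe$ itself, e.g. from $\pe(t,x)=p_{l,\eps}(t)+\int_0^x\px\pe\,dy$ together with the uniform bound on $p_{l,\eps}$. I would then simply remark that $W^{1,\infty}(0,1)\hookrightarrow W^{1,3/2}(0,1)$ with $\norm{\cdot}_{W^{1,3/2}}\le\norm{\cdot}_{W^{1,\infty}}$ on the bounded interval, which gives the stated conclusion. \emph{The main obstacle} is the argument bounding $\ue$ at a single point: one must convert the bounded-variation-in-$x$ information on $\pe$ (equivalently, the bounded boundary spread $p_{r,\eps}-p_{l,\eps}$) plus the uniform Lipschitz bound on $\ue$ into a genuine uniform sup-bound on $|\ue|$, handling separately the sign-changing and sign-definite cases as above; care is also needed to ensure every constant is independent of both $t$ and $\eps$, which relies crucially on the $t$-uniform bound in \eqref{eq:ass3} and on $\rho_l,\rho_r,p_l,p_r\in W^{1,\infty}(0,\infty)$.
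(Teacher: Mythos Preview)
Your argument is correct, but it is not the paper's proof of this lemma. The paper proceeds by a direct energy estimate on the nonlinear elliptic equation: it multiplies $-\px\!\big(g(\px\pe/(\alpha\re))\big)=F(t,x,\re)$ by $\pe-\wpe$, where $\wpe(t,x)=(1-x)p_{l,\eps}(t)+xp_{r,\eps}(t)$, integrates over $(0,1)$, and uses the coercivity $g(\xi)\xi=|\xi|^{3/2}$ (up to the bounded weight $\alpha\re$) to obtain an inequality of the type $\int_0^1|\px\pe|^{3/2}\,dx\le c\big(\int|\px\pe|^{3/2}\big)^{1/3}+c\big(\int|\px\pe|^{3/2}\big)^{2/3}+c$, which closes. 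Your route instead goes through $\ue$: you bound $\px\ue=F$ in $L^\infty$, then use the boundary spread $p_{r,\eps}-p_{l,\eps}$ together with the Lipschitz control of $\ue$ to pin down $\sup|\ue|$, and finally read off $\px\pe=-\alpha\re\ue|\ue|\in L^\infty$. This is precisely the mechanism the paper exploits \emph{later}, in Lemmas~\ref{lm:ulip} and~\ref{lm:plip}, to get the stronger $W^{1,\infty}$ bound (the paper's bound on $m_\eps(t)=\ue(t,0)$ via the monotonicity of $(\xi,\zeta)\mapsto(\xi+\zeta)|\xi+\zeta|$ plays the same role as your sign-changing/sign-definite case split). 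So you have effectively anticipated and merged those two later lemmas into a proof of the present one. The paper's energy method is more in the spirit of nonlinear elliptic theory and does not require unpacking the system via $\ue$; your approach is more elementary, yields the sharper $W^{1,\infty}$ conclusion immediately, and makes the subsequent Lemma~\ref{lm:pxl32}-dependent step in Lemma~\ref{lm:ul2} redundant once Lemmas~\ref{lm:ulip}--\ref{lm:plip} are in hand.
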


\begin{proof}
Define
\begin{equation*}
\wpe(t,x)=(1-x)p_{l,\eps}(t)+xp_{r,\eps}(t).
\end{equation*}
Multiply the second equation in \eqref{eq:CLeps} by $\pe-\wpe$ and integrate over $(0,1)$
\begin{equation*}
-\int_0^1\px \left(g\left(\frac{\px \pe}{\alpha\re}\right)\right)(\pe-\wpe)dx=\int_0^1F(t,x,\re)(\pe-\wpe)dx.
\end{equation*}
Since
\begin{equation*}
(\pe-\wpe)(t,0)=(\pe-\wpe)(t,1)=0,
\end{equation*}
we have 
\begin{equation}
\label{eq:pxl32.1}
\int_0^1 g\left(\frac{\px \pe}{\alpha\re}\right)\px\pe dx
=\int_0^1 g\left(\frac{\px \pe}{\alpha\re}\right)\px\wpe dx
+\int_0^1F(t,x,\re)(\pe-\wpe)dx.
\end{equation}
Due to the H\"older Inequality, the definition of  $g$ and Lemma \ref{lm:maxp}
\begin{equation}
\label{eq:pxl32.2}
\int_0^1 g\left(\frac{\px \pe}{\alpha\re}\right)\px\pe dx\ge\frac{1}{\alpha\gamma}\int_0^1|\px\pe|^{3/2}dx.
\end{equation}
Thanks to the definition of the function $g$, \eqref{eq:ass1} and Lemma \ref{lm:maxp} 
\begin{equation}
\label{eq:pxl32.3}
\begin{split}
\int_0^1& g\left(\frac{\px \pe}{\alpha\re}\right)\px\wpe dx+\int_0^1F(t,x,\re)(\pe-\wpe)dx\\
\le&\frac{\norm{\px\wpe}_{L^\infty((0,\infty)\times(0, 1))}}{\sqrt{\alpha\gamma_*}}\int_0^1\sqrt{\abs{\px\pe}}dx\\
&+\norm{F}_{L^\infty((0,\infty)\times(0, 1)\times(\gamma_*,\gamma))}\int_0^1|\pe|dx\\
&+\norm{F}_{L^\infty((0,\infty)\times(0, 1)\times(\gamma_*,\gamma))}\norm{\wpe}_{L^\infty((0,\infty)\times(0, 1))}\\
\le&c\left(\int_0^1{\abs{\px\pe}}^{3/2}dx\right)^{1/3}+c\int_0^1|\px\pe|dx+c\\
\le&c\left(\int_0^1{\abs{\px\pe}}^{3/2}dx\right)^{1/3}+c\left(\int_0^1{\abs{\px\pe}}^{3/2}dx\right)^{2/3}+c.
\end{split}
\end{equation}
Using \eqref{eq:pxl32.2} and \eqref{eq:pxl32.3} in \eqref{eq:pxl32.1} we get
\begin{equation*}
\int_0^1|\px\pe|^{3/2}dx\le c\left(\int_0^1{\abs{\px\pe}}^{3/2}dx\right)^{1/3}+c\left(\int_0^1{\abs{\px\pe}}^{3/2}dx\right)^{2/3}+c,
\end{equation*}
therefore  $\{\px\pe\}_{\eps>0}$ is bounded in $L^\infty(0,\infty;L^{3/2}(0,1))$. Thanks to \eqref{eq:ass1}, we get the claim.
\end{proof}

\begin{lemma}
\label{lm:ul2} 
The family $\{\sqrt{\eps}\px\re\}_{\eps>0}$ is bounded in $L^{2}((0,T)\times(0,1))$ for every $T\ge0$.
\end{lemma}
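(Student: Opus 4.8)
The plan is to establish an $\eps$-weighted energy estimate for $\re$, obtained by testing the parabolic equation against $\re$ minus a boundary lift, much as in the proof of Lemma~\ref{lm:pxl32}. Before that, I would first record an auxiliary uniform bound on the velocity: from the second equation in \eqref{eq:PPeps} one has $\abs{\px\pe}=\alpha\re\ue^{2}\ge\alpha\gamma_*\ue^{2}$, so Lemma~\ref{lm:maxp} together with Lemma~\ref{lm:pxl32} gives $\norm{\ue(t,\cdot)}_{L^2(0,1)}\le c$ uniformly in $t,\eps$; combining this with $\px\ue=F(t,x,\re)$ (third equation in \eqref{eq:PPeps}), which is bounded by \eqref{eq:ass1}, \eqref{eq:ass3} and Lemma~\ref{lm:maxp}, and with the one-dimensional inequality $\norm{w}_{L^\infty(0,1)}\le\norm{w}_{L^1(0,1)}+\norm{\px w}_{L^1(0,1)}$, I obtain a uniform bound $\norm{\ue}_{L^\infty((0,\infty)\times(0,1))}\le c$.

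Next I would introduce the affine lift $\wre(t,x)=(1-x)\rho_{l,\eps}(t)+x\rho_{r,\eps}(t)$, which vanishes at $x=0,1$, satisfies $\gamma_*\le\wre\le\gamma$ by \eqref{eq:ass3}, and has $\norm{\px\wre}_{L^\infty}$ and $\norm{\pt\wre}_{L^\infty}$ bounded uniformly in $\eps$ by \eqref{eq:ass1}. Multiplying the first equation of \eqref{eq:CLeps} in the form \eqref{eq:r} by $\re-\wre$ and integrating over $(0,1)\times(0,T)$, the viscous term integrated by parts in $x$ (no boundary contribution, since $\re-\wre=0$ at the endpoints) produces exactly $-\eps\int_0^T\!\int_0^1(\px\re)^2\,dx\,dt+\eps\int_0^T\!\int_0^1\px\re\,\px\wre\,dx\,dt$, so the quantity to be controlled appears with the right sign.

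It then remains to bound the four remaining terms by $c(1+T)$ uniformly in $\eps\in(0,1]$. The time-derivative term, after writing $\pt\re=\pt(\re-\wre)+\pt\wre$, produces $-\tfrac12\int_0^1(\re-\wre)^2(T,\cdot)\,dx+\tfrac12\int_0^1(\re-\wre)^2(0,\cdot)\,dx$ plus a contribution controlled by $\norm{\pt\wre}_{L^\infty}(\gamma-\gamma_*)T$, hence is $\le c(1+T)$ by Lemma~\ref{lm:maxp}. The convection term equals $-\int_0^T\!\int_0^1\ue\,\px\re\,(\re-\wre)\,dx\,dt$ by \eqref{eq:defueps}, and is bounded by $\norm{\ue}_{L^\infty}(\gamma-\gamma_*)\int_0^T\norm{\px\re(t,\cdot)}_{L^1(0,1)}\,dt\le cT$ thanks to the auxiliary bound and Lemma~\ref{lm:uBV}. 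The reaction term is bounded by $\gamma\,\norm{F}_{L^\infty}(\gamma-\gamma_*)T$, and the leftover viscous term by $\eps\,\norm{\px\wre}_{L^\infty}\int_0^T\norm{\px\re(t,\cdot)}_{L^1(0,1)}\,dt\le cT$, again by Lemma~\ref{lm:uBV}. Collecting everything yields $\eps\int_0^T\!\int_0^1(\px\re)^2\,dx\,dt\le c(1+T)$, which is the claim. I expect the only nonroutine point to be the auxiliary $L^\infty$ bound on $\ue$; the rest is the standard ``test against the solution minus a lift'' energy argument, and it is precisely the absence of a boundary bound on $\px\re$ that forces the use of $\wre$ rather than testing directly against $\re$.
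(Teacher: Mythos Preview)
Your argument is correct, but it proceeds along a different route than the paper. The paper also tests \eqref{eq:r} against $\re-\wre$ with the same affine lift, but it does \emph{not} invoke any $L^\infty$ bound on $\ue$ and does \emph{not} use Lemma~\ref{lm:uBV}. Instead, it integrates the convection term by parts in~$x$: using $\px\ue=F$ and the vanishing of $\re-\wre$ at the endpoints, one rewrites $\int\ue\,\px\re\,(\re-\wre)=-\int F\,(\re-\wre)^2/2-\int g(\px\pe/(\alpha\re))\,\px\wre\,(\re-\wre)$, and the last integral is controlled via $|g|\le(\alpha\gamma_*)^{-1/2}\sqrt{|\px\pe|}$ together with Lemma~\ref{lm:pxl32}. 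The leftover viscous term $\eps\int\px\re\,\px\wre$ is then absorbed by Young's inequality into $\tfrac{\eps}{2}\int(\px\re)^2$, rather than via the $L^1$ bound on $\px\re$.

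The trade-off is this: your argument front-loads a uniform bound on $\ue$ (essentially anticipating Lemma~\ref{lm:ulip}, via a shorter argument than the one the paper gives there) and then exploits the BV estimate on $\re$, which makes the energy computation entirely elementary; the paper's argument avoids both ingredients, relying only on Lemmas~\ref{lm:maxp} and~\ref{lm:pxl32}, at the price of the additional integration by parts and the Young absorption step. Both are complete.
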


\begin{proof}
Define
\begin{equation*}
\wre=(1-x)\rho_{l,\eps}(t)+x\rho_{r,\eps}(t)
\end{equation*}
Multiply the second equation in \eqref{eq:r} by $\re-\wre$ and integrate over $(0,T)\times(0,1)$
\begin{align*}
\int_0^T\int_0^1&\pt\re(\re-\wre)dtdx-\int_0^T\int_0^1g\left(\frac{\px \pe}{\alpha\re}\right)\px\re(\re-\wre)dtdx\\
&+\int_0^T\int_0^1\re F(t,x,\re)(\re-\wre)dtdx=\eps\int_0^T\int_0^1\pxx\re(\re-\wre)dtdx.
\end{align*}
Integrating by parts and rearranging the terms we get
\begin{align*}
\eps\int_0^T&\int_0^1(\px\re)^2dtdx=-\int_0^1\frac{(\re-\wre)^2(T,x)}{2}dx+\int_0^1\frac{(\re-\wre)^2(0,x)}{2}dx\\
&-\int_0^T\int_0^1 \pt\wre(\re-\wre)dtdx+\int_0^T\int_0^1 F(t,x,\re)\frac{(\re-\wre)^2}{2}dtdx\\
&+\int_0^T\int_0^1g\left(\frac{\px \pe}{\alpha\re}\right)\px\wre(\re-\wre)dtdx
-\int_0^T\int_0^1\re F(t,x,\re)(\re-\wre)dtdx\\
&+\eps\int_0^T\int_0^1\px\re\px\wre dtdx.
\end{align*}
Using \eqref{eq:ass1}  Lemmas \ref{lm:maxp}, \ref{lm:pxl32} we get
\begin{align*}
\eps\int_0^T&\int_0^1(\px\re)^2dtdx\le c+cT+c\int_0^T\int_0^1\sqrt{\abs{\px\pe}}dtdx+\frac{\eps}{2}\int_0^T\int_0^1(\px\re)^2dtdx\\
&\le c+cT
+cT\norm{\px\pe}_{L^\infty(0,\infty;L^{3/2}(0,1))}
+\frac{\eps}{2}\int_0^T\int_0^1(\px\re)^2dtdx
\\
&\le c+cT+\frac{\eps}{2}\int_0^T\int_0^1(\px\re)^2dtdx,
\end{align*}
that gives the claim.
\end{proof}

\begin{lemma}
\label{lm:rcomp}
There exist a subsequence $\{\eps_k\}_k\subset(0,\infty),\,\eps_k\to0$ and a function $\rho$ such that
\begin{equation}
\label{eq:rcomp}
\begin{split}
&\rho\in L^\infty(0,T;BV(0,1)),\,T>0,\\
&\gamma_*\le\rho\le\gamma,\\
&\rho_{\eps_k}\to\rho,\quad\text{in $L^r_{loc}((0,\infty)\times(0,1)),\,1\le r<\infty,$ and a.e. in $(0,\infty)\times(0,1)$ as $k\to\infty$}.
\end{split}
\end{equation}
\end{lemma}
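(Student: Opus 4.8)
The plan is to extract the convergent subsequence announced in Lemma~\ref{lm:rcomp} by combining the uniform bounds already established with a suitable compactness theorem. First I would note that Lemma~\ref{lm:maxp} gives the pointwise bound $\gamma_*\le\re\le\gamma$ uniformly in $\eps$ and $t$, which immediately yields that $\{\re\}_{\eps>0}$ is bounded in $L^\infty((0,\infty)\times(0,1))$; together with Lemma~\ref{lm:uBV}, which controls $\{\px\re\}_{\eps>0}$ in $L^\infty(0,T;L^1(0,1))$, this shows $\{\re(t,\cdot)\}$ is bounded in $L^\infty(0,T;BV(0,1))$ for every $T>0$. The second and third properties in \eqref{eq:rcomp} will then be inherited by any weak-$*$ limit (the pointwise bounds pass to the limit, and lower semicontinuity of the total variation gives the $BV$ bound), so the real content is the strong $L^r_{loc}$ and a.e.\ convergence.

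For that I would use a compactness-in-time argument. The natural tool is a variant of the Aubin--Lions--Simon lemma, or equivalently Kruzhkov's time-translation estimate: one needs a uniform bound on the time translates $\norm{\re(t+\tau,\cdot)-\re(t,\cdot)}_{L^1}$ in terms of $\tau$. This is obtained by testing the first equation of \eqref{eq:CLeps}, written in the form \eqref{eq:r}, against a regularization of $\sgn{\re(t+\tau,x)-\re(t,x)}$ and integrating in space and time: the viscous term $\eps\pxx\re$ is handled using Lemma~\ref{lm:ul2}, which bounds $\sqrt{\eps}\,\px\re$ in $L^2((0,T)\times(0,1))$ so that $\eps\px\re\to0$ appropriately; the convective term is controlled by the $BV$ bound of Lemma~\ref{lm:uBV} and the bound on $u_\eps=-g(\px\pe/(\alpha\re))$ coming from Lemma~\ref{lm:pxl32}; and the source term $\re F(t,x,\re)$ is bounded by Lemmas~\ref{lm:maxp} and the regularity of $f$ in \eqref{eq:ass1}. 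Combining the spatial $BV$ bound with this time-equicontinuity estimate and the uniform $L^\infty$ bound, a Fréchet--Kolmogorov-type argument on $(0,T)\times(0,1)$ (or Helly's theorem applied slicewise plus dominated convergence) produces a subsequence $\eps_k\to0$ and a limit $\rho$ with $\rho_{\eps_k}\to\rho$ in $L^1_{loc}((0,\infty)\times(0,1))$ and a.e.; the $L^r_{loc}$ convergence for $1\le r<\infty$ then follows from the uniform $L^\infty$ bound by interpolation.

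The main obstacle I anticipate is the time-translation estimate, specifically controlling the contribution of the convective flux $\px(\re\ue)$. The velocity $\ue$ is only bounded, via Lemma~\ref{lm:pxl32}, in $L^\infty(0,\infty;W^{1,3/2}(0,1))$, hence in $L^\infty$ in space by Sobolev embedding on the interval, but one must be careful that the flux $g(\px\pe/(\alpha\re))\sqrt{\re}$ appearing in \eqref{eq:CLeps} is estimated using the $L^{3/2}$-bound on $\px\pe$ together with the pointwise bounds on $\re$, and that these bounds are uniform in $\eps$. A related delicate point is that the estimate should be localized away from $t=0$ (or else handled with the initial-data regularity of $\rho_{0,\eps}$) so that the boundary-in-time terms do not spoil the argument; working on intervals $(\delta,T)$ and then sending $\delta\to0$, or absorbing the $t=0$ term via the $W^{1,\infty}$ bound on $\rho_0$, takes care of this. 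Once the time-translation bound is in hand, the passage to the limit is standard.
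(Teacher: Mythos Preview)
Your approach is correct and would yield the lemma, but the paper takes a shorter road. You detail a Kruzhkov-type time-translation estimate, controlling each term of the equation to obtain $L^1$ equicontinuity in time. The paper instead applies Aubin--Lions (Simon's version) directly: writing the first equation of \eqref{eq:CLeps} as $\pt\re=\px(\re\ue+\eps\px\re)$, one sees that $\{\pt\re\}_{\eps>0}$ is bounded in $L^2(0,T;H^{-1}(0,1))$, because $\re\ue$ is bounded in $L^\infty(0,T;L^2(0,1))$ (Lemma~\ref{lm:maxp} together with Lemma~\ref{lm:pxl32}, since $|\ue|\le C|\px\pe|^{1/2}\in L^3(0,1)\subset L^2(0,1)$) and $\eps\px\re=\sqrt\eps\cdot\sqrt\eps\,\px\re$ is bounded in $L^2((0,T)\times(0,1))$ by Lemma~\ref{lm:ul2}. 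Coupled with the $L^\infty(0,T;BV(0,1))$ bound from Lemmas~\ref{lm:maxp} and \ref{lm:uBV}, Aubin--Lions with the triple $BV(0,1)\hookrightarrow\hookrightarrow L^2(0,1)\hookrightarrow H^{-1}(0,1)$ gives compactness in $L^2((0,T)\times(0,1))$, and the uniform $L^\infty$ bound then upgrades this to all $L^r$, $1\le r<\infty$.

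Your route ultimately uses the same ingredients and is a legitimate alternative; the gain in the paper's version is that the $H^{-1}$ bound on $\pt\re$ is read off the equation in one line, without doubling the solution or handling the viscous term by hand. One small slip in your write-up: Lemma~\ref{lm:pxl32} bounds $\pe$, not $\ue$, in $L^\infty(0,\infty;W^{1,3/2}(0,1))$; the bound on $\ue$ must be derived from it via $\ue=-g(\px\pe/(\alpha\re))$, which yields $\ue\in L^\infty(0,\infty;L^3(0,1))$ rather than $W^{1,3/2}$. This is still ample for either argument.
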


\begin{proof}
Given $T>0$.
Thanks to Lemmas \ref{lm:maxp}, \ref{lm:uBV}, and \ref{lm:ul2}, we can apply the Aubin-Lions Lemma \cite[Theorem 5]{Si} $\{\re\}_{\eps>0}$.
Indeed $\{\re\}_{\eps>0}$ is uniformly bounded in $L^\infty(0,T;BV(0,1))$ and, using the equation, 
$\{\pt\re\}_{\eps>0}$ is uniformly bounded in $L^2(0,T;H^{-1}(0,1))$. As a consequence there exists a function 
$\rho$  satisfying \eqref{eq:rcomp}.
\end{proof}

\begin{lemma}
\label{lm:ulip}
The family $\{\ue\}_{\eps>0}$ is bounded in $L^\infty(0,\infty;W^{1,\infty}(0,1))$.
\end{lemma}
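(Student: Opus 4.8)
\emph{Proof proposal.} The idea is to use the reformulation \eqref{eq:PPeps}, where $\px\ue$ is given by the \emph{explicit} and manifestly bounded expression $F(t,x,\re)$, so that controlling $\ue$ in $W^{1,\infty}$ reduces to controlling a single value of $\ue$; for the latter I will use the $L^{3/2}$ bound on $\px\pe$ already obtained in Lemma~\ref{lm:pxl32} through the algebraic relation $\px\pe=-\alpha\re\ue|\ue|$.

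\textbf{Step 1: the derivative.} By the third equation in \eqref{eq:PPeps}, $\px\ue=F(t,x,\re)$ with $F(t,x,\rho)=\rho^{-2}\big(f(t,x)-\beta_1(\gamma-\rho)-\beta_2(\gamma-\rho)^4\big)$. Assumption \eqref{eq:ass1} gives $f$ continuous on $[0,\infty)\times[0,1]$ and \eqref{eq:ass3} gives $0\le f\le\beta_1(\gamma-\gamma_*)+\beta_2(\gamma-\gamma_*)^4$, so $f$ is bounded; combined with $\gamma_*\le\re\le\gamma$ from Lemma~\ref{lm:maxp}, this yields
\[
\norm{\px\ue}_{L^\infty((0,\infty)\times(0,1))}\le\norm{F}_{L^\infty((0,\infty)\times(0,1)\times(\gamma_*,\gamma))}\le C.
\]
\textbf{Step 2: an $L^3$ bound in space.} The second equation in \eqref{eq:PPeps} gives $\abs{\px\pe}=\alpha\re\abs{\ue}^2\ge\alpha\gamma_*\abs{\ue}^2$ by Lemma~\ref{lm:maxp}, hence $\abs{\ue}^3\le(\alpha\gamma_*)^{-3/2}\abs{\px\pe}^{3/2}$. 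Integrating over $(0,1)$ and invoking Lemma~\ref{lm:pxl32},
\[
\norm{\ue(t,\cdot)}_{L^3(0,1)}^3\le(\alpha\gamma_*)^{-3/2}\int_0^1\abs{\px\pe(t,x)}^{3/2}dx\le C\qquad\text{for a.e. }t>0.
\]
\textbf{Step 3: conclusion.} Fix such a $t$ and let $x_t\in[0,1]$ be a point where $\abs{\ue(t,\cdot)}$ attains its minimum; then, since $\abs{(0,1)}=1$, $\abs{\ue(t,x_t)}\le\int_0^1\abs{\ue(t,y)}dy\le\norm{\ue(t,\cdot)}_{L^3(0,1)}\le C$. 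For any $x\in[0,1]$, integrating $\px\ue$ from $x_t$ to $x$ and using Step~1, $\abs{\ue(t,x)}\le\abs{\ue(t,x_t)}+\int_0^1\abs{\px\ue(t,y)}dy\le C$. Together with Step~1 this gives $\norm{\ue(t,\cdot)}_{W^{1,\infty}(0,1)}\le C$ for a.e. $t>0$, which is the claim.

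\textbf{Expected main obstacle.} There is no deep difficulty; the only subtlety worth flagging is that Lemma~\ref{lm:pxl32} controls $\px\pe$ only in $L^{3/2}$ in space, not in $L^\infty$, so one \emph{cannot} read off an $L^\infty$ bound for $\ue$ directly from $\abs{\px\pe}=\alpha\re\abs{\ue}^2$. The resolution is to use that weaker integral bound merely to locate one point where $\ue$ is of order one, and then to propagate this to all $x$ using the uniform $L^\infty$ bound on $\px\ue=F(t,x,\re)$ from Step~1.
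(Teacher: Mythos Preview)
Your proof is correct and takes a genuinely different route from the paper's.

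Both arguments share Step~1 verbatim: $\px\ue=F(t,x,\re)$ is bounded by \eqref{eq:ass1}, \eqref{eq:ass3}, and Lemma~\ref{lm:maxp}. The divergence is in how one pins down a single value of $\ue(t,\cdot)$. You recycle Lemma~\ref{lm:pxl32}: the algebraic relation $\abs{\px\pe}=\alpha\re\abs{\ue}^2$ turns the $L^{3/2}$ bound on $\px\pe$ into an $L^3$ bound on $\ue$, and then a minimum--average argument locates an anchor point from which you integrate $\px\ue$. The paper instead writes $\ue(t,x)=m_\eps(t)+K_\eps(t,x)$ with $m_\eps(t)=\ue(t,0)$ and $K_\eps(t,x)=\int_0^x F(t,y,\re)\,dy$, and bounds $m_\eps$ directly by integrating $\px\pe=-\alpha\re G(m_\eps+K_\eps)$ over $(0,1)$: using the monotonicity of $\xi\mapsto\xi|\xi|$ together with $\gamma_*\le\re\le\gamma$ yields
\[
g\!\left(\frac{p_l-p_r}{\alpha\gamma}\right)-\norm{K_\eps}_{L^\infty}\le m_\eps(t)\le g\!\left(\frac{p_l-p_r}{\alpha\gamma_*}\right)+\norm{K_\eps}_{L^\infty}.
\]
Your route is shorter because it leverages a lemma already on the shelf; the paper's route is more self-contained (it does not call Lemma~\ref{lm:pxl32}) and makes the dependence of the bound on the pressure boundary data $p_l,p_r$ explicit, which is in the spirit of the later long-time analysis.
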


\begin{proof}
Thanks to \eqref{eq:ass1}, \eqref{eq:PPeps} and Lemma \ref{lm:maxp}
\begin{equation*}
\px\ue \in L^\infty((0,\infty)\times(0,1)).
\end{equation*}
We have to prove that
\begin{equation}
\label{eq:ulip1}
\ue \in L^\infty((0,\infty)\times(0,1)).
\end{equation}
Define
\begin{equation}
\label{eq:mK}
m_\eps(t)=\ue(t,0),\qquad K_\eps(t,x)=\int_0^x F(t,y,\re(t,y))dy.
\end{equation}
We have 
\begin{equation}
\label{eq:ulip2}
\ue(t,x)=m_\eps(t)+ K_\eps(t,x).
\end{equation}
Since, thanks to \eqref{eq:ass1}, \eqref{eq:ass3}, \eqref{eq:PPeps} and Lemma \ref{lm:maxp}, 
\begin{equation}
\label{eq:ulip3}
K_\eps\in L^\infty((0,\infty)\times(0,1)),
\end{equation}
in order to prove \eqref{eq:ulip1} we have to prove 
\begin{equation}
\label{eq:ulip4}
\me \in L^\infty(0,\infty).
\end{equation}
Integrating the second equation in \eqref{eq:PPeps} in $(0,1)$,
 using \eqref{eq:maxp}, and the monotonicity of the map $(\xi,\zeta)\mapsto(\xi+\zeta)\abs{\xi+\zeta}$
in both the variables we have
\begin{align*}
p_l(t)-p_r(t)=&\alpha\int_0^1\re(t,y)(m_\eps(t)+ K_\eps(t,x))\abs{m_\eps(t)+ K_\eps(t,x)}dx\\
\le&\alpha\gamma\left(m_\eps(t)+ \norm{K_\eps}_{L^\infty((0,\infty)\times(0,1))}\right)\abs{m_\eps(t)+ \norm{K_\eps}_{L^\infty((0,\infty)\times(0,1))}},\\
p_l(t)-p_r(t)=&\alpha\int_0^1\re(t,y)(m_\eps(t)+ K_\eps(t,x))\abs{m_\eps(t)+ K_\eps(t,x)}dx\\
\ge&\alpha\gamma_*\left(m_\eps(t)- \norm{K_\eps}_{L^\infty((0,\infty)\times(0,1))}\right)\abs{m_\eps(t)- \norm{K_\eps}_{L^\infty((0,\infty)\times(0,1))}}.
\end{align*}
Applying the function $g$ to both sides we get
\begin{align*}
g\left(\frac{p_l(t)-p_r(t)}{\alpha\gamma}\right)&-\norm{K_\eps}_{L^\infty((0,\infty)\times(0,1))}\\
\le&m_\eps(t)\le g\left(\frac{p_l(t)-p_r(t)}{\alpha\gamma_*}\right)+\norm{K_\eps}_{L^\infty((0,\infty)\times(0,1))}.
\end{align*}
Thanks to \eqref{eq:ulip3}, we have \eqref{eq:ulip4} and then the claim.
\end{proof}

\begin{lemma}
\label{lm:plip}
The family $\{\pe\}_{\eps>0}$ is bounded in $L^\infty(0,\infty;W^{1,\infty}(0,1))$.
\end{lemma}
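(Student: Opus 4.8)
The plan is to read off the gradient bound directly from the momentum relation in \eqref{eq:PPeps} and then recover the $L^\infty$ bound on $\pe$ itself from the boundary data. First I would invoke the second identity in \eqref{eq:PPeps}, namely $\px\pe=-\alpha\re\ue|\ue|$. By Lemma \ref{lm:maxp} we have $\gamma_*\le\re\le\gamma$, and by Lemma \ref{lm:ulip} the family $\{\ue\}_{\eps>0}$ is bounded in $L^\infty(0,\infty;W^{1,\infty}(0,1))$, in particular uniformly bounded in $L^\infty((0,\infty)\times(0,1))$. Hence the product $\re\ue|\ue|$ is uniformly bounded and
\begin{equation*}
\norm{\px\pe}_{L^\infty((0,\infty)\times(0,1))}\le\alpha\gamma\norm{\ue}_{L^\infty((0,\infty)\times(0,1))}^2\le c.
\end{equation*}

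Next I would control $\pe$ itself through the boundary trace. Writing $\pe(t,x)=\pe(t,0)+\int_0^x\px\pe(t,y)\,dy=p_{l,\eps}(t)+\int_0^x\px\pe(t,y)\,dy$ and using that $p_{l,\eps}$ is a uniformly bounded approximation of $p_l\in W^{1,\infty}(0,\infty)$ (by \eqref{eq:ass1}) together with the gradient bound just obtained, one gets $\norm{\pe}_{L^\infty((0,\infty)\times(0,1))}\le c$. Combining the two estimates yields that $\{\pe\}_{\eps>0}$ is bounded in $L^\infty(0,\infty;W^{1,\infty}(0,1))$, which is the claim.

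There is essentially no obstacle here: the lemma is an immediate consequence of Lemma \ref{lm:ulip} and the algebraic form of the momentum balance, and it strengthens the earlier $W^{1,3/2}$ bound of Lemma \ref{lm:pxl32} (recall $L^\infty(0,1)\subset L^{3/2}(0,1)$). The only point requiring a little care is that the $W^{1,\infty}$ bound needs control of both the function and its derivative, so the short argument via the boundary trace $p_{l,\eps}$ is genuinely needed in addition to the pointwise bound on $\px\pe$.
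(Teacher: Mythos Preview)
Your proposal is correct and follows essentially the same route as the paper: bound $\px\pe$ pointwise via the momentum relation $\px\pe=-\alpha\re\ue|\ue|$ together with Lemmas \ref{lm:maxp} and \ref{lm:ulip}, then recover the $L^\infty$ bound on $\pe$ by integrating from the left boundary trace. If anything, your version is slightly more careful in writing $p_{l,\eps}$ rather than $p_l$ for the boundary value.
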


\begin{proof}
Thanks to \eqref{eq:ass1}, \eqref{eq:PPeps} and Lemmas \ref{lm:maxp}, \ref{lm:ulip}
\begin{equation}
\label{eq:ulip1*}
\px\pe \in L^\infty((0,\infty)\times(0,1)).
\end{equation}
We have to prove that
\begin{equation}
\label{eq:ulip2}
\pe \in L^\infty((0,\infty)\times(0,1)).
\end{equation}
Since
\begin{equation*}
\pe(t,x)=p_l(t)+\int_0^x\px\pe(t,y)dy,
\end{equation*}
\eqref{eq:ulip2} follows from \eqref{eq:ass1} and \eqref{eq:ulip1*}.
\end{proof}

\begin{lemma}
\label{lm:pxxl1}
The family $\{\pxx\pe\}_{\eps>0}$ is bounded in $L^1((0,T)\times(0,1))$ for every $T\ge0$.
\end{lemma}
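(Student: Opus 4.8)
The plan is to exploit the algebraic relation for $\px\pe$ furnished by the second equation of \eqref{eq:PPeps} and differentiate it once more in $x$. Writing $\px\pe=-\alpha\re\ue|\ue|$ and using $\px(\ue|\ue|)=2|\ue|\px\ue$ (legitimate since $\pe$, $\re$ and $\ue$ are smooth), one obtains the pointwise identity
\[
\pxx\pe=-\alpha(\px\re)\,\ue|\ue|-2\alpha\,\re\,|\ue|\,\px\ue .
\]
Then I would take absolute values, integrate over $(0,T)\times(0,1)$, and estimate the two resulting terms separately.

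For the first term, Lemma \ref{lm:ulip} bounds $\ue$ in $L^\infty((0,\infty)\times(0,1))$, so that $\int_0^1|\px\re|\,|\ue|^2\,dx\le C\int_0^1|\px\re|\,dx$, while Lemma \ref{lm:uBV} bounds $\int_0^1|\px\re|\,dx$ uniformly in $t\in[0,T]$; integrating in time yields a bound of the form $CT$. For the second term, Lemma \ref{lm:maxp} gives $\re\le\gamma$, Lemma \ref{lm:ulip} gives $|\ue|\le C$, and the third equation of \eqref{eq:PPeps} together with \eqref{eq:ass1}, \eqref{eq:ass3} and Lemma \ref{lm:maxp} gives $|\px\ue|=|F(t,x,\re)|\le C$; hence this term is also $\le CT$. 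Adding the two estimates gives $\norm{\pxx\pe}_{L^1((0,T)\times(0,1))}\le CT$ uniformly in $\eps$, which is the assertion.

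Since everything reduces to a single differentiation plus the uniform bounds already established in Lemmas \ref{lm:maxp}, \ref{lm:uBV} and \ref{lm:ulip}, I do not expect a genuine obstacle here. The one point worth attention is that $\px\re$ is controlled only in $L^1$ in space (uniformly in time), not pointwise; it must therefore be paired with the $L^\infty$ factor $\ue|\ue|$, keeping the estimate at the level of $L^\infty(0,T;L^1(0,1))\hookrightarrow L^1((0,T)\times(0,1))$ rather than attempting a stronger norm.
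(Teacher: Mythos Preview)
Your proof is correct and follows essentially the same route as the paper: differentiate the identity $\px\pe=-\alpha\re\ue|\ue|$ to obtain $\pxx\pe=-\alpha\ue|\ue|\,\px\re-2\alpha\re|\ue|\,\px\ue$, then bound the two terms via Lemmas~\ref{lm:maxp}, \ref{lm:uBV}, and \ref{lm:ulip}. The paper's version cites Lemma~\ref{lm:plip} in place of \ref{lm:ulip} (which is equivalent through \eqref{eq:defueps}) and drops the harmless factor $2$, but the argument is the same.
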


\begin{proof}
A direct computation gives
\begin{equation*}
\pxx\pe=-\alpha \ue\abs{\ue}\px\re-\alpha \re\abs{\ue}\px\ue.
\end{equation*}
Therefore, the claim follows from Lemmas \ref{lm:maxp}, \ref{lm:uBV}, and \ref{lm:plip}.
\end{proof}

%

\begin{lemma}
\label{lm:pcomp}
Let $\{\eps_k\}_k$ and $\rho$ be the ones introduced in Lemma \ref{lm:rcomp}.
There exists a function $p\in L^\infty(0,\infty;W^{1,\infty}(0,1)),\,\pxx p\in \mathcal{M}((0,T)\times(0,1)),\,T>0$ such that
\begin{equation}
\label{eq:pcomp}
\begin{split}
&p_{\eps_k}\to p,\quad\text{in $L^r_{loc}((0,\infty)\times(0,1)),\,1\le r<\infty,$ and a.e. in $(0,\infty)\times(0,1)$ as $k\to\infty$},\\
&p_{\eps_k}(t,\cdot)\to p(t,\cdot),\quad\text{unifrmly in $(0,1)$ for a.e.  $t\ge0$ as $k\to\infty$},\\
&\px p_{\eps_k}\to \px p,\quad\text{in $L^r_{loc}((0,\infty)\times(0,1)),\,1\le r<\infty,$ and a.e. in $(0,\infty)\times(0,1)$ as $k\to\infty$}.
\end{split}
\end{equation}
\end{lemma}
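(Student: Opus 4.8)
The plan is to deduce the compactness of $\{\pe\}_{\eps>0}$ from that of $\{\ue\}_{\eps>0}$, exploiting the algebraic identity $\px\pe=-\alpha\re\ue\abs{\ue}$ coming from the second equation in \eqref{eq:PPeps}. Indeed, once we know that $\uek\to u$ and (by Lemma \ref{lm:rcomp}) $\rek\to\rho$ pointwise a.e., with uniform bounds, this identity immediately gives a.e.\ and $L^r_{loc}$ convergence of $\px\pek$ to $-\alpha\rho u\abs{u}$; writing $\pek(t,x)=p_{l,\eps_k}(t)+\int_0^x\px\pek(t,y)\,dy$ and using Fubini we then get, for a.e.\ $t$, that $\px\pek(t,\cdot)\to\px p(t,\cdot)$ in $L^1(0,1)$ and hence $\pek(t,\cdot)\to p(t,\cdot)$ uniformly on $[0,1]$; dominated convergence finally upgrades this to the $L^r_{loc}$ and a.e.\ statements of \eqref{eq:pcomp}. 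So the whole lemma reduces to the convergence of $\uek$.

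For that I would use the decomposition $\ue(t,x)=\me(t)+K_\eps(t,x)$ from the proof of Lemma \ref{lm:ulip}, with $K_\eps$ as in \eqref{eq:mK}. The term $K_\eps$ is the easy one: since $\rek\to\rho$ a.e.\ and $\gamma_*\le\rek\le\gamma$, the integrand $F(t,y,\rek(t,y))$ converges a.e.\ and stays bounded, so dominated convergence gives $K_{\eps_k}(t,x)\to K(t,x):=\int_0^x F(t,y,\rho(t,y))\,dy$ for a.e.\ $t$ and every $x$, with a uniform $L^\infty$ bound from \eqref{eq:ulip3}. The genuine obstacle is the boundary value $\me(t)=\ue(t,0)$, which is only determined implicitly. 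Here I would integrate the momentum equation over $(0,1)$, exactly as in Lemma \ref{lm:ulip}, to obtain
\[
p_{l,\eps}(t)-p_{r,\eps}(t)=\alpha\int_0^1\re(t,x)\bigl(\me(t)+K_\eps(t,x)\bigr)\bigl|\me(t)+K_\eps(t,x)\bigr|\,dx=:\Phi_\eps\bigl(t,\me(t)\bigr),
\]
and observe that, because $\re\ge\gamma_*>0$, the map $m\mapsto\Phi_\eps(t,m)$ is continuous, strictly increasing and onto $\R$, so $\me(t)$ is its unique preimage of $p_{l,\eps}(t)-p_{r,\eps}(t)$; the same holds for the limiting map $\Phi(t,m):=\alpha\int_0^1\rho(t,x)(m+K(t,x))\abs{m+K(t,x)}\,dx$, whose inverse in $m$ is continuous. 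Using $\rek\to\rho$ and $K_{\eps_k}\to K$ a.e.\ together with the uniform bounds, one checks that $\Phi_{\eps_k}(t,\cdot)\to\Phi(t,\cdot)$ locally uniformly for a.e.\ $t$; combined with the convergence of the boundary data $p_{l,\eps_k}-p_{r,\eps_k}\to p_l-p_r$ and the uniform bound on $\me$ from Lemma \ref{lm:ulip}, a standard continuity argument for inverse functions yields $m_{\eps_k}(t)\to m(t)$ for a.e.\ $t$, where $\Phi(t,m(t))=p_l(t)-p_r(t)$. Hence $\uek=m_{\eps_k}+K_{\eps_k}\to u:=m+K$ a.e., and since $\{\ue\}$ is bounded in $L^\infty$ by Lemma \ref{lm:ulip}, also in $L^r_{loc}$, $1\le r<\infty$.

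It then remains to record the regularity of the limit. The bound $p\in L^\infty(0,\infty;W^{1,\infty}(0,1))$ follows from Lemma \ref{lm:plip} by weak-$*$ lower semicontinuity of the norm. For $\pxx p\in\mathcal{M}((0,T)\times(0,1))$ I would invoke Lemma \ref{lm:pxxl1}: the measures $\pxx\pek$ have uniformly bounded total variation on $(0,T)\times(0,1)$, so by weak-$*$ compactness in the space of Radon measures a subsequence converges weakly-$*$ to some measure, whose distributional limit is $\pxx p$; thus $\pxx p\in\mathcal{M}((0,T)\times(0,1))$ for every $T>0$. The step I expect to be delicate is precisely the a.e.-in-$t$ passage to the limit for the implicitly defined boundary value $\me(t)$: one must keep track of measurability of $t\mapsto m(t)$ and of the exceptional null set where the pointwise convergences of $\rek$ and $K_{\eps_k}$ fail, but the strict monotonicity of $\Phi_\eps$ together with the uniform bounds makes this routine rather than essential.
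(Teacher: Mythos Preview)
Your proposal is correct and follows essentially the same route as the paper. The paper introduces an auxiliary function $a_\eps(t)$ determined implicitly by
\[
p_{r}(t)-p_{l}(t)=\alpha\int_0^1\re(t,y)\,G\!\left(a_\eps(t)-\int_0^y F(t,z,\re(t,z))\,dz\right)dy,
\]
which is nothing but your $m_\eps(t)$ with the opposite sign (indeed $a_\eps=-m_\eps$, since $G$ is odd and $\ue=-g(\px\pe/\alpha\re)$); it then proves $a_{\eps_k}\to a$ for a.e.\ $t$ via a subsequence argument and a mean-value identity, and finally recovers the convergence of $p_{\eps_k}$ and $\px p_{\eps_k}$ from the explicit representation of $\pe$ in terms of $a_\eps$. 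Your phrasing through $\ue=m_\eps+K_\eps$ and the map $\Phi_\eps(t,\cdot)$ is an equivalent packaging of the same implicit-function step, and your use of the identity $\px\pe=-\alpha\re\ue\abs{\ue}$ to transfer convergence from $\uek$ to $\px\pek$ is exactly the content of the paper's formula~(3.14)/(3.16).
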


\begin{proof}
Since the functions
\begin{align*}
\mathcal{G}_\eps(a)=&\alpha\int_0^1\re(t,y)G\left(a-\int_0^y F(t,z,\re(t,z))dz\right)dy,\\
\mathcal{G}(a)=&\alpha\int_0^1\rho(t,y)G\left(a-\int_0^y F(t,z,\rho(t,z))dz\right)dy
\end{align*}
are strictly increasing and diverging at infinity, there exists $a_\eps,\,a\in L^\infty(0,\infty)$ such that
\begin{equation}
\label{eq:a1}
\begin{split}
p_r(t)=&p_l(t)+\alpha\int_0^1\re(t,y)G\left(a_\eps(t)-\int_0^y F(t,z,\re(t,z))dz\right)dy,\\
p_r(t)=&p_l(t)+\alpha\int_0^1\rho(t,y)G\left(a(t)-\int_0^y F(t,z,\rho(t,z))dz\right)dy.
\end{split}
\end{equation}
We claim that
\begin{equation}
\label{eq:a2}
\begin{split}
p_\eps(t,x)=&p_l(t)+\alpha\int_0^x\re(t,y)G\left(a_\eps(t)-\int_0^y F(t,z,\re(t,z))dz\right)dy,\\
p(t,x)=&p_l(t)+\alpha\int_0^x\rho(t,y)G\left(a(t)-\int_0^y F(t,z,\rho(t,z))dz\right)dy,
\end{split}
\end{equation}
solve
\begin{equation}
\label{eq:a3}
\begin{cases}
\displaystyle-\px \left(g\left(\frac{\px \pe}{\alpha\re}\right)\right)=F(t,x,\re),&\quad t>0,\,x\in(0,1),\\
\displaystyle-\px \left(g\left(\frac{\px p}{\alpha\re}\right)\right)=F(t,x,\rho),&\quad t>0,\,x\in(0,1),\\
\pe(t,0)=p(t,0)=p_{l,\eps}(t),&\quad t>0,\\
\pe(t,1)=p(t,1)=p_{r,\eps}(t),&\quad t>0.
\end{cases}
\end{equation}
Thanks to \eqref{eq:a1} and \eqref{eq:a2}, the boundary conditions in \eqref{eq:a3} are trivially satisfied.
Differentiating with respect to $x$ both sides of \eqref{eq:a2} we get
\begin{equation}
\label{eq:a4}
\begin{split}
\px p_\eps(t,x)=&\alpha\re(t,x)G\left(a_\eps(t)-\int_0^x F(t,z,\re(t,z))dz\right),\\
\px p(t,x)=&\alpha\rho(t,x)G\left(a(t)-\int_0^x F(t,z,\rho(t,z))dz\right),
\end{split}
\end{equation}
Since $g$ is the inverse of $f$ we get
\begin{equation}
\label{eq:a4}
\begin{split}
g\left(\frac{\px p_\eps(t,x)}{\alpha\re(t,x)}\right)=&a_\eps(t)-\int_0^x F(t,z,\re(t,z))dz,\\
g\left(\frac{\px p(t,x)}{\alpha\rho(t,x)}\right)=&a(t)-\int_0^x F(t,z,\rho(t,z))dz.
\end{split}
\end{equation}
One final differentiation gives the equations in \eqref{eq:a3}.

Thanks to \eqref{eq:rcomp} and \eqref{eq:a2}, in order to prove \eqref{eq:pcomp} it suffices to show
\begin{equation}
\label{eq:acomp}
a_{\eps_k}\to a,\quad\text{in $L^r_{loc}(0,\infty),\,1\le r<\infty,$ and a.e. in $(0,\infty)$ as $k\to\infty$}.
\end{equation}

Thanks to \eqref{eq:rcomp} for almost every $t$
\begin{equation}
\label{eq:a5}
\rho_{\eps_k}(t,\cdot)\to \rho(t,\cdot),\quad\text{in $L^r(0,1),\,1\le r<\infty,$ and a.e. in $(0,1)$ as $k\to\infty$}.
\end{equation}
Let $\tau$ be such that \eqref{eq:a5} holds.
Due to \eqref{eq:ass1}, \eqref{eq:a2} and the definition of $G$, $\{a_{\eps_k}\left(\tau\right)\}_k$ is bounded, 
then passing to a subsequence, there exists $\widetilde a\in\R$ such that
\begin{equation*}
\lim\limits_{k\to\infty}a_{\eps_k}\left(\tau\right)=\widetilde a.
\end{equation*}
We have to prove that
\begin{equation}
\label{eq:a6}
\widetilde a =a\left(\tau\right).
\end{equation}
Sending $k\to \infty$ in the first of \eqref{eq:a1} and sutracting with the second we have
\begin{align*}
0=&\alpha\int_0^1\rho(\tau,y)\left[
G\left(\widetilde a-\int_0^y F(\tau,z,\rho(t,y)(\tau,z))dz\right)dy-G\left(a(\tau)-\int_0^y F(\tau,z,\rho(\tau,z))dz\right)\right]dy\\
=&\alpha\left(\widetilde a -a\left(\tau\right)\right)\int_0^1\int_0^1\rho(\tau,y)
G'\left(\theta\widetilde a+(1-\theta)a(\tau)-\int_0^y F(\tau,z,\rho(\tau,z))dz\right)dyd\theta\\
=&2\alpha\left(\widetilde a -a\left(\tau\right)\right)\int_0^1\int_0^1\rho(\tau,y)
\abs{\theta\widetilde a+(1-\theta)a(\tau)-\int_0^y F(\tau,z,\rho(\tau,z))dz}dyd\theta,
\end{align*}
and then, or \eqref{eq:a6} holds or
\begin{equation*}
\theta\widetilde a+(1-\theta)a(\tau)-\int_0^y F(\tau,z,\rho(\tau,z))dz=0,\quad \text{for every $\theta\in[0,1]$ and a.e $y\in[0,1].$}
\end{equation*}
and differentiating with respect to $\theta$ we get again \eqref{eq:a6}.
\end{proof}

Direct consequence of \eqref{eq:defueps} and the previous lemma is the following result.

\begin{lemma}
\label{lm:ucomp}
Let $\{\eps_k\}_k$, $\rho$, and $p$ be the ones introduced in Lemmas \ref{lm:rcomp} and \ref{lm:pcomp}. We have that
\begin{equation}
\label{eq:ucomp}
\begin{split}
&u_{\eps_k}\to u,\quad\text{in $L^r_{loc}((0,\infty)\times(0,1)),\,1\le r<\infty,$ and a.e. in $(0,\infty)\times(0,1)$ as $k\to\infty$},\\
&u_{\eps_k}(t,\cdot)\to u(t,\cdot),\quad\text{uniformly in $(0,1)$ for a.e.  $t\ge0$ as $k\to\infty$}
\end{split}
\end{equation}
where $u$ is defined in \eqref{eq:defu}.
\end{lemma}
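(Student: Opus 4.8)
The plan is to deduce both convergences from two representations of $\ue$ that are already available and from the pointwise convergences established for $\{\rek\}_k$ and $\{\px\pek\}_k$. From \eqref{eq:defueps} and the identity $g\!\left(\frac{\px \pe}{\alpha\re}\right)=a_\eps(t)-\int_0^x F(t,z,\re(t,z))\,dz$ obtained in the proof of Lemma~\ref{lm:pcomp} (together with its analogue with $\re,a_\eps$ replaced by $\rho,a$), one has, for a.e.\ $(t,x)$,
\[
\ue(t,x)=-a_\eps(t)+\int_0^x F(t,z,\re(t,z))\,dz,\qquad
u(t,x)=-a(t)+\int_0^x F(t,z,\rho(t,z))\,dz,
\]
and the second formula defines a function $u$ which, again by the proof of Lemma~\ref{lm:pcomp}, coincides with the one in \eqref{eq:defu}.

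First I would establish the a.e.\ and $L^r_{loc}$ convergence. By Lemma~\ref{lm:maxp} and \eqref{eq:rcomp} we have $\re,\rho\ge\gamma_*>0$, so $g$ is evaluated at arguments staying in a fixed bounded set; since $\rek\to\rho$ and $\px\pek\to\px p$ a.e.\ in $(0,\infty)\times(0,1)$ by Lemmas~\ref{lm:rcomp} and \ref{lm:pcomp}, continuity of $g$ together with \eqref{eq:defueps} and \eqref{eq:defu} gives $\uek\to u$ a.e.\ in $(0,\infty)\times(0,1)$. Since $\{\uek\}_k$ is bounded in $L^\infty((0,\infty)\times(0,1))$ by Lemma~\ref{lm:ulip}, the dominated convergence theorem upgrades this to convergence in $L^r_{loc}((0,\infty)\times(0,1))$ for every $1\le r<\infty$.

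Then I would prove the uniform-in-$x$ convergence for a.e.\ $t$. Fixing $\tau$ in the full-measure set of times for which \eqref{eq:a5} holds and $a_{\eps_k}(\tau)\to a(\tau)$ (the latter being valid for a.e.\ $\tau$ by \eqref{eq:acomp}), the two representations above yield
\[
\norm{\uek(\tau,\cdot)-u(\tau,\cdot)}_{L^\infty(0,1)}
\le \abs{a_{\eps_k}(\tau)-a(\tau)}+\int_0^1\abs{F(\tau,z,\rek(\tau,z))-F(\tau,z,\rho(\tau,z))}\,dz .
\]
The first term vanishes as $k\to\infty$ by the choice of $\tau$; for the second, since $\gamma_*\le\rek,\rho\le\gamma$ and $F(\tau,\cdot,\cdot)$ is, by \eqref{eq:ass1}, Lipschitz in its last argument uniformly on $[0,1]\times[\gamma_*,\gamma]$, the integral is controlled by $C\norm{\rek(\tau,\cdot)-\rho(\tau,\cdot)}_{L^1(0,1)}$, which tends to $0$ by \eqref{eq:a5}. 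This gives the claim. I expect the only mildly delicate point to be the bookkeeping of exceptional $t$-sets — the uniform convergence holds away from the union of the null set where \eqref{eq:a5} fails and the null set where \eqref{eq:acomp} fails — but this union is again null, so no genuine obstacle arises; everything reduces to the a.e.\ convergences and the $L^\infty$ bound already proved.
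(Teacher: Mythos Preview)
Your proof is correct and follows the same route the paper intends: the a.e.\ and $L^r_{loc}$ convergence come directly from \eqref{eq:defueps}, the a.e.\ convergences of $\rek$ and $\px\pek$ in Lemmas~\ref{lm:rcomp}--\ref{lm:pcomp}, and the $L^\infty$ bound of Lemma~\ref{lm:ulip}, while the uniform-in-$x$ convergence is obtained via the integral representation $\ue(t,x)=-a_\eps(t)+\int_0^x F(t,z,\re)\,dz$ established inside the proof of Lemma~\ref{lm:pcomp}. The paper records the lemma as an immediate consequence of those results without spelling out the steps; your argument simply makes them explicit.
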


\begin{proof}[Proof of Theorem \ref{th:main}]
We begin by verifying that the functions $(\rho,\,p)$ introduced in Lemmas \ref{lm:rcomp} and \ref{lm:pcomp}
provide a solution to \eqref{eq:CL} in the sense of Definition \ref{def:sol}.
\ref{def:rreg}, \ref{def:rnonsing}, and \ref{def:preg} are already stated in  Lemmas \ref{lm:rcomp} and \ref{lm:pcomp}.
\ref{def:ell-bc} follows from \ref{lm:pcomp} and the identities  $\pek(\cdot,0)=p_l(\cdot)$ and $\pek(\cdot,1)=p_r(\cdot)$.
Since for every test function $\vfi\in C^\infty(\R\times(0,1))$ with compact support we have
\begin{equation*}
\int_0^\infty\!\!\int_0^1 g\left(\frac{\px \pek}{\alpha\rho}\right)\px\vfi dtdx=\int_0^\infty\!\!\int_0^1 F(t,x,\rek)\vfi dtdx,
\end{equation*}
\ref{def:elleq} follows from Lemmas \ref{lm:rcomp} and \ref{lm:pcomp}.

We have to prove \ref{def:hypeq}. Given $c\in\R$, using  \eqref{eq:PPeps}, a direct computation gives, passing to a smooth approximation of the 
absolute value,
\begin{equation}
\label{eq:entr}
\pt\abs{\rek-c}+\px\left(\uek\abs{\rek-c}\right)+c\sgn{\rek-c}F(t,x,\rek)\le \eps_k\pxx\abs{\rek-c}.
\end{equation}
Let $\vfi\in C^\infty(\R^2)$ be a  nonnegative test function. Multiplying \eqref{eq:entr} by $\vfi$ and integrating over $(0,\infty)\times(0,1)$ we get
\begin{equation}
\label{eq:def-hyp-eps}
\begin{split}
\int_0^\infty\!\!\int_0^1 &\left(\abs{\rek-c}\pt\vfi+\uek\abs{\rek-c}\px\vfi-c\sgn{\rek-c}F(t,x,\rek)\vfi\right)dtdx\\
&-\eps_k\int_0^\infty\!\!\int_0^1\px\rek\sgn{\rek-c}\px\vfi dtdx\\
&-\int_0^\infty\left(\uek(t,1)\abs{\rho_r(t)-c}\vfi(t,1)-\uek(t,0)\abs{\rho_l(t)-c}\vfi(t,0)\right)dt\\
&+\eps_k\int_0^\infty\left(\px\rek(t,1)\sgn{\rho_r(t)-c}\vfi(t,1)-\px\rek(t,0)\sgn{\rho_l(t)-c}\vfi(t,0)\right)dt\\
&+\int_0^1\abs{\rho_0(x)-c}\vfi(0,x)dx\ge0.
\end{split}
\end{equation}
Using Lemma \ref{lm:uBV},  \ref{lm:rcomp}, and \ref{lm:ucomp} as $k\to\infty$
\begin{equation}
\label{eq:def-hyp-eps.1}
\begin{split}
\int_0^\infty\!\!\int_0^1 &\left(\abs{\rho-c}\pt\vfi+u\abs{\rho-c}\px\vfi-c\sgn{\rho-c}F(t,x,\rho)\vfi\right)dtdx\\
&-\int_0^\infty\left(u(t,1)\abs{\rho_r(t)-c}\vfi(t,1)-u(t,0)\abs{\rho_l(t)-c}\vfi(t,0)\right)dt\\
&+\lim\limits_k\Big[\eps_k\int_0^\infty\left(\px\rek(t,1)\sgn{\rho_r(t)-c}\vfi(t,1)\right.\\
&\qquad\qquad\qquad \left.-\px\rek(t,0)\sgn{\rho_l(t)-c}\vfi(t,0)\right)dt\Big]\\
&+\int_0^1\abs{\rho_0(x)-c}\vfi(0,x)dx\ge0.
\end{split}
\end{equation}
We claim that
\begin{equation}
\label{eq:def-hyp-eps.2}
\begin{split}
\lim\limits_k\eps_k&\int_0^\infty\left(\px\rek(t,1)\vfi(t,1)-\px\rek(t,0)\vfi(t,0)\right)dt\\
=&\int_0^\infty\left(u(t,1)\left(\rho_r(t)-\rho(t,1)\right)\vfi(t,1)-u(t,0)\left(\rho_l(t)-\rho(t,0)\right)\vfi(t,0)\right)dt.
\end{split}
\end{equation}

For every $n\in\N\setminus\{0\}$ consider a function $\chi_n$ such that
\begin{equation}
\label{eq:chi}
\begin{split}
&\chi_n\in C^\infty(\R),\qquad \chi_n(0)=\chi_n(1)=1,\qquad 0\le \chi_n\le 1,\\
&\abs{\chi_n'}\le n,\qquad x\in\left[\frac{1}{n},1-\frac{1}{n}\right]\Rightarrow \chi_n(x)=0.
\end{split}
\end{equation}
Multiplying \eqref{eq:PPeps} by $\vfi\chi_n$ and integrating over $(0,\infty)\times(0,1)$ we get
\begin{equation}
\label{eq:def-hyp-eps.3}
\begin{split}
\int_0^\infty\!\!\int_0^1 &\left(\rek\pt\vfi\chi_n+\uek\rek\left(\px\vfi\chi_n+\vfi\chi_n'\right)\right)dtdx\\
&-\eps_k\int_0^\infty\!\!\int_0^1\px\rek\left(\px\vfi\chi_n+\vfi\chi_n'\right) dtdx\\
&-\int_0^\infty\left(\uek(t,1)\rho_r(t)\vfi(t,1)-\uek(t,0)\rho_l(t)\vfi(t,0)\right)dt\\
&+\eps_k\int_0^\infty\left(\px\rek(t,1)\vfi(t,1)-\px\rek(t,0)\vfi(t,0)\right)dt\\
&+\int_0^1\rho_0(x)\vfi(0,x)\chi_n(x)dx=0.
\end{split}
\end{equation}
As $k\to\infty$, thanks to  Lemmas \ref{lm:uBV},  \ref{lm:rcomp}, and \ref{lm:ucomp}, we get
\begin{equation}
\label{eq:def-hyp-eps.4}
\begin{split}
\lim\limits_k\eps_k&\int_0^\infty\left(\px\rek(t,1)\vfi(t,1)-\px\rek(t,0)\vfi(t,0)\right)dt\\
=&-\int_0^\infty\!\!\int_0^1 \left(\rho\pt\vfi\chi_n+u\rho\left(\px\vfi\chi_n+\vfi\chi_n'\right)\right)dtdx\\
&+\int_0^\infty\left(u(t,1)\rho_r(t)\vfi(t,1)-u(t,0)\rho_l(t)\vfi(t,0)\right)dt\\
&-\int_0^1\rho_0(x)\vfi(0,x)\chi_n(x)dx.
\end{split}
\end{equation}
As $n\to\infty$ we get \eqref{eq:def-hyp-eps.2}.

Using \eqref{eq:def-hyp-eps.2} in \eqref{eq:def-hyp-eps.1} we get \eqref{eq:def-hyp}.
\end{proof}

\section{Proof of Theorem \ref{th:main02}}
\label{stationaryproblem}

The claimed convergences (\ref{stab01})-(\ref{stab04}) relies on 
\begin{align*}
\rho&\in L^\infty((0,\infty)\times(0,1)),\\
 u&\in L^\infty(0,\infty;W^{1,\infty}(0,1)),\\
 p&\in L^\infty(0,\infty;W^{1,\infty}(0,1)).
\end{align*}
Indeed, we have that
\begin{align*}
\{\rho(t,\cdot)\}_{t\ge0}&\>\text{is uniformly bounded in}\> L^\infty(0,1),\\
\{ u(t,\cdot)\}_{t\ge0}&\>\text{is uniformly bounded in}\> W^{1,\infty}(0,1),\\
\{ p(t,\cdot)\}_{t\ge0}&\>\text{is uniformly bounded in}\> W^{1,\infty}(0,1).
\end{align*}

Using  the strong convergence of $u(t,\cdot) \rightarrow u_\infty$  we can pass to the limit in the relation (\ref{eq:defu})  obtaining  
\begin{equation}
\label{eq:end1}
\px p_{\infty}= - \alpha \rho_\infty G(u_\infty)
\end{equation}
  and thus  
$$
u_\infty = g\left(\frac{\px p_\infty}{\alpha \rho_\infty}\right).
$$
This suffices to pass to limit in the left hand side of (\ref{eq:def-ell}).
But it is not sufficient to  prove a convergence of 
$F = F(t,\cdot,\rho(t,\cdot))$ to $F_\infty(\cdot,\rho_\infty)$ as $t \rightarrow \infty$. 

The additional assumption
$$
\px p\in L^{\infty}(0,\infty;BV(0,1))
$$
allows us to improve the weak convergence in \eqref{stab04} to the strong one in \eqref{eq:stab05}. Indeed, we have that 
\begin{align*}
\{ p(t,\cdot)\}_{t\ge0}&\>\text{is uniformly bounded in}\> W^{1,\infty}(0,1)\\
\{\px p(t,\cdot)\}_{t\ge0}&\>\text{is uniformly bounded in}\> BV(0,1).
\end{align*}

Then with (\ref{eq:defu}) we can conclude strong convergence of $\rho$ 
$$
\rho(t,\cdot) \rightarrow  \rho_\infty  \quad \mbox{ in } L^r(0,1) , \, 1 \leq r<\infty \mbox{ as } t \rightarrow \infty.
$$  
This allows us to pass to the limit in the nonlinearity $f$ and $F$  and thus in the  the complete weak formulations (\ref{eq:def-ell}) and (\ref{eq:def-hyp}) of the model. 
This - as a byproduct - gives the existence of a solution
\begin{equation}
	\label{eq:sollongtime}
(\rho_\infty,\,p_\infty) \in L^\infty(0,1) \times W^{1,\infty}(0,1)
\end{equation}
of the stationary problem (\ref{eq:CLlongtime}) in the sense of Definition \ref{def:sol}.
In particular, $\px p_\infty\in L^\infty(0,1)$ due to \eqref{eq:end1}.

Obviously, there is no granted uniqueness. 

We gain the uniqueness of the solutions of the stationary  problem working directly on (\ref{eq:CLlongtime}).
We rewrite the problem as follows omitting the the index $\infty$ for the unknown quantities in order to keep the notation simple
\begin{equation}
	\label{eq:PPlt}
	\begin{cases}
		\px(\rho u)=0,& \\
		\px p=-\alpha \rho u|u|,& \\
		\px u={\rho^{-2}}(f_{\infty}(x)-\beta_1T-\beta_2T^4),&\\
		T=\gamma-\rho.&
	\end{cases}
\end{equation}
 
The first equation in (\ref{eq:PPlt}) allows to define the flux $j=\rho u$ which has to be constant. If we assume no vacuum ($\rho>0$) and using $u=\frac{j}{\rho}$ we can rewrite the system as 
\begin{align}
			\px p =&-\alpha \frac{j|j|}{\rho},  \label{plt}\\ 
		j \px\rho =&-(f_{\infty}(x)-\beta_1 (\gamma - \rho)-\beta_2 (\gamma - \rho)^4). \label{rho_lt}
\end{align}
We continue our analysis reminding that our solution satisfies\eqref{eq:sollongtime}.

From a parabolic trough application point of view vacuum solutions are not relevant. Infact below we will comment on the non-occurrence of vacuum solutions.

Equation (\ref{plt}) allows us to relate the flux $j$ to the  boundary conditions on $p$
\begin{equation}
 p_{r,\infty}-p_{l,\infty} = p_{diff}= - \alpha \, j|j|  \int_0^1 \frac{1}{\rho(x)} dx
	\label{mvsDeltap}
\end{equation}
i.e. on 
\begin{equation}
	p_{r,\infty}-p_{l,\infty} < 0 \Leftrightarrow j > 0, \qquad
	p_{r,\infty}-p_{l,\infty} = 0 \Leftrightarrow j = 0, \qquad
	p_{r,\infty}-p_{l,\infty} > 0 \Leftrightarrow j < 0.
	\label{mvsDeltap1}
\end{equation}
The case $j=0$ (induced by $p_{r,\infty}-p_{l,\infty} = 0$)  gives a polynomial equation for $\rho$
\begin{equation}
	0 = f_{\infty}(x)-\beta_1 (\gamma - \rho(x))-\beta_2 (\gamma - \rho(x))^4),
	\label{mzero}
\end{equation}
which - given the special form $p(y) = y^4 + c_0 y - c_1 = 0, \quad c_0,c_1 >0$ - has a unique non-negative solution for $\rho$.   
Only if this (unique) solution of the equation  (\ref{mzero}) is compatible with the boundary conditions on $\rho$ we have a continuous solution for the stationary case for $j=0$. 


For the cases $j \ne 0$ we have the ODE (\ref{rho_lt})  for $\rho$ with 
- according to the inflow conditions - an initial value $\rho_{l,\infty}$ on the left for $j>0$ and an initial value $\rho_{r,\infty}$ on the right for $j<0$, respectively.
Under the assumptions of Lemma \ref{lm:maxp} vacuum does not occur. This can be shown adapting Lemma \ref{lm:maxp} to the stationary case and using the  same technique (known sub and super-solutions) to proof it. 

From now on we discuss the case  $j >0$, the discussion for $j<0$ is
analogous. 
We combine the two equations (\ref{plt}) and (\ref{rho_lt}), use the formulation
\begin{equation}	
	\rho(x) = \rho_{l,\infty} + \int_0^x\px\rho(y)dy
	= \rho_{l,\infty} - \frac{1}{j} \int_0^x 
	\left(
	f_{\infty}(y)-\beta_1 (\gamma - \rho(y))-\beta_2 (\gamma - \rho(y))^4
	\right) dy
	\label{comb_lt}
\end{equation}
and obtain
\begin{align}
 	p_{diff}  = &   -\alpha \, j|j|  \int_0^1 \frac{1}{\rho(x)} dx \nonumber \\
 	 = & -\alpha \, j|j|  \int_0^1 
 	\frac{1}{\rho_{l,\infty} - \frac{1}{j} \int_0^x 
 		\left(
 		f_{\infty}(y)-\beta_1 (\gamma - \rho(y))-\beta_2 (\gamma - \rho(y))^4
 			\right) dy} dx.
 	\label{comb_lt1}
 \end{align}
We see that  for
\begin{equation}	
	\px \rho =  - \frac{1}{j} \int_0^x 
	\left(
	f_{\infty}(y)-\beta_1 (\gamma - \rho(y))-\beta_2 (\gamma - \rho(y))^4
	\right) dy   \geq 0  
	\label{comb_lt2}
\end{equation}
the quantity $p_{diff}$ strictly decreases as $j$ increases.


For $\px\rho  < 0$ we derive $p_{diff}$ with respect to $j$ (denoting $\frac{d}{dj} = ()^{\prime}$) and obtain 
\begin{eqnarray}
	p_{diff}^{\prime} 
	& = & - 2\alpha \,|j|  \int_0^1 
	\frac{1}{\rho_{l,\infty} - \frac{1}{j} \int_0^x 
		\left(
		f_{\infty}(y)-\beta_1 (\gamma - \rho(y))-\beta_2 (\gamma - \rho(y))^4
		\right) dy} dx
	\nonumber \\
	&  & -\alpha \int_0^1 
    \frac{\int_0^x 
    	\left(
    	f_{\infty}(y)-\beta_1 (\gamma - \rho(y))-\beta_2 (\gamma - \rho(y))^4
    	\right) dy
    }{\left[\rho_{l,\infty} - \frac{1}{j} \int_0^x 
	\left(
	f_{\infty}(y)-\beta_1 (\gamma - \rho(y))-\beta_2 (\gamma - \rho(y))^4
	\right) dy\right]^2} dx   < 0. 
	\label{comb_lt3}
\end{eqnarray}
Note that the last inequality is only true for $\px\rho  < 0$.
Thus, in both cases,  $\px\rho  \geq  0$ and $\px\rho  < 0$, the quantity  $p_{diff}$ is a strictly monoton decreasing function in $j$ and therefore strictly monoton decreasing everywhere. 

In addition we have  $p_{diff} (0) = 0$ and  $p_{diff} (j) \rightarrow  -\infty$  as $j \rightarrow \infty $.
Since $p_{r,\infty}-p_{l,\infty} < 0$ for $j>0$ there exists a unique $j>0$ and therefore a unique solutions of the problem. 

As a consequence we have the convergence along all the times $t\to\infty$ stated in \eqref{eq:stab05} and the proof of Theorem \ref{th:main02} is concluded.


%
%
%
%
\section{Conclusion}
\label{sec:conclusion}
We present an analysis of an asymptotic model for the thermo-fluid dynamics 
in a single collector pipe of a parabolic trough power plant. The analysis covers existence, the stationary problem and { (partial) results on the long time behaviour}. 
The results obtained are the basis for further investigations, in particular for the analysis of a network of collector pipes in a realistic PPTP. There 
a deep analytical understanding can help to improve the simulation and optimisation approaches. Insofar this paper present a important step towards the analytical understanding of such a complex thermo-fluid dynamic setting
with a direct application in reality. \\

%
%
\appendix
\section{A comment on the validity of assumption (\ref{eq:ass3})}
\label{assumption}
Here we comment on the assumption (\ref{eq:ass3}) on the function $f$. 
The question is if this assumption is giustified in real world PPTP applications. To answer this, we take data from an existing parabolic trough power plant - NOOR I in Marocco - which was already studied in \cite{BGSP}.
We start with 
\begin{equation}
	f(t,x) = q(t,x) + \beta_1 (\gamma - \rho_{out}) + \beta_2 (\gamma - \rho_{sky})^4,
	\label{estimate01}
\end{equation}
were $0 \leq q$ is the heat source due to the solar radiation.
Since $\gamma$  is an upper bound for the densities we see that the first inequality  $0 \leq f$ is always tue. \\
For the second inequality  
\begin{equation}
	f \leq  \beta_1 (\gamma - \gamma_{\star}) + \beta_2 (\gamma - \gamma_{\star})^4
	\label{estimate02}
\end{equation}
we consider the unscaled version. We denote the original unscaled quantities with a $\tilde{ }$   (see \cite{BGSP}).
We have to show  
\begin{equation}
	\tilde{f} = 
	\tilde{q} 
	+\tilde{\beta}_1 (\tilde{\gamma} - \tilde{\rho}_{out}) + \tilde{\beta}_2 (\tilde{\gamma} - \tilde{\rho}_{sky})^4	
	\leq  \tilde{\beta}_1 (\tilde{\gamma} - \tilde{\gamma}_{\star}) + \tilde{\beta}_2 (\tilde{\gamma} - \tilde{\gamma}_{\star})^4
	\label{estimate02}
\end{equation}
and thus 
\begin{equation}
	\tilde{q}  \leq
	\tilde{\beta}_1 (\tilde{\gamma} - \tilde{\gamma}_{\star})
	- \tilde{\beta}_1 (\tilde{\gamma} - \tilde{\rho}_{out})
	+ \tilde{\beta}_2 (\tilde{\gamma} - \tilde{\gamma}_{\star})^4
	- \tilde{\beta}_2 (\tilde{\gamma} - \tilde{\rho}_{sky})^4	.
	\label{estimate03}
\end{equation}
Using the same scaling as in \cite{BGSP} we can compare physical values.
Considering i.e. a very low density $\tilde{\gamma}_{\star} = 0.05 \rho_{out}$ -- which corresponds to a very high temperature of $\tilde{T}_{\star} = 1360 \,K$ far from the operational domain --
we obtain values for the various powers per lenght (along the collector tube)
\begin{align*}
	\tilde{q}  = & \, 4300 \,W/m, \\
	\tilde{\beta}_1 (\tilde{\gamma} - \tilde{\gamma}_{\star})
	- \tilde{\beta}_1 (\tilde{\gamma} - \tilde{\rho}_{out}) 
	 = & \, 8904 \, W/m,\\
	\tilde{\beta}_2 (\tilde{\gamma} - \tilde{\gamma}_{\star})^4
	- \tilde{\beta}_2 (\tilde{\gamma} - \tilde{\rho}_{sky})^4	
	 = & \, 6092 \, W/m.
\end{align*}
Thus we see that the second inequality is easily fullfilled since
\begin{equation}
	4300 \, W/m  <  8904 \, W/m + 6092 \, W/m.
	\label{estimate05}
\end{equation}
Our assumption could only become critical in the future for more advanced PTPP's if the absorbed energy $q$ becomes significantely larger (by increassing the mirrors per tube) or/and if
the losses  (convection and radiation) become significantely less due to better isolation or protection against radiative losses.
However, there is still a considerable margin in the assumption.

\section*{Acknowledgments}

   G.\ M.\ Coclite is member of Gruppo Nazionale per l'Analisi Matematica, la Probabilit\`a e le loro Applicazioni (GNAMPA) of the Istituto Nazionale di Alta Matematica (INdAM).
 
   G.\ M.\ Coclite has been partially supported by the Research Project of National Relevance ``Evolution problems involving interacting scales'' granted by the Italian Ministry of Education, University and Research (MUR Prin 2022, project code 2022M9BKBC, Grant No. CUP D53D23005880006).
 
   G.\ M.\ Coclite acknowledges financial support under the National Recovery and Resilience Plan (NRRP) funded by the European Union - NextGenerationEU -  
Project Title ``Mathematical Modeling of Biodiversity in the Mediterranean sea: from bacteria to predators, from meadows to currents'' - project code P202254HT8 - CUP B53D23027760001 -  
Grant Assignment Decree No. 1379 adopted on 01/09/2023 by the Italian Ministry of University and Research (MUR).

G.\ M.\ Coclite  has
been partially supported by the Project funded under the National Recovery and
Resilience Plan (NRRP), Mission 4 Component 2 Investment 1.4 -Call for tender
No. 3138 of 16/12/2021 of Italian Ministry of University and Research funded by
the European Union -NextGenerationEUoAward Number: CN000023, Concession
Decree No. 1033 of 17/06/2022 adopted by the Italian Ministry of University and
Research, CUP: D93C22000410001, Centro Nazionale per la Mobilit\`a Sostenibile.

G.\ M.\ Coclite  was partially supported by the Italian Ministry of University and Research under the Programme ``Department of Excellence'' Legge 232/2016 (Grant No. CUP - D93C23000100001).

G.\ Gasser  acknowledges the support by the Deutsche Forschungsgemeinschaft (DFG) within the Research Training Group GRK 2583 ``Modeling, Simulation and Optimization of Fluid Dynamic Applications".

G.\ M.\ Coclite and G.\ Gasser express their gratitude to HIAS (Hamburg Institute for Advanced Study) for their warm hospitality.

\end{document}